\newcommand{\RR}{{\mathbb R}}
\newcommand{\CC}{{\mathbb C}}
\newcommand{\Z}{{\mathbb Z}}
\newcommand{\N}{\mathbb{N}}
\newcommand{\bx}{\mathbf{x}}
\newcommand{\ca}{\mathbf{a}}
\newcommand{\by}{\mathbf{y}}
\newcommand{\bz}{\mathbf{z}}
\newcommand{\K}{\mathbf{K}}
\newcommand{\bA}{\mathcal{A}}
\newcommand{\bR}{\mathcal{R}}
\newcommand{\bg}{\mathbf{g}}
\newcommand{\br}{\mathbf{r}}
\newcommand{\bu}{\mathbf{u}}
\newcommand{\bi}{\mathbf{i}}
\def\ba{{\boldsymbol{\alpha}}}
\def\bb{{\boldsymbol{\beta}}}
\def\bbg{{\boldsymbol{\gamma}}}
\newcommand{\ud}{\mathrm{d}}
\newcommand{\supp}[1]{\mbox{\upshape supp}(#1)}
\newcommand{\re}{\mathbb{R}}
\newcommand{\Qk}{\mathcal{Q}_k}
\newcommand{\bM}{\mathbf{M}}
\newif\ifcomment
\setlist[enumerate]{leftmargin=.5in}
\setlist[itemize]{leftmargin=.5in}
\crefname{hypothesis}{Hypothesis}{Hypotheses}
\title{Exploiting Sign Symmetries in Minimizing Sums of Rational Functions\thanks{Submitted to the editors DATE.
\funding{This work was funded by National Key R\&D Program of China under grant No. 2023YFA1009401, Natural Science Foundation of China under grant No. 12201618 and 12171324.}}}
\author{Feng Guo\thanks{School of Mathematical Sciences, Dalian University of Technology, Dalian 116024, Liaoning Province, China (\email{fguo@dlut.edu.cn}).}
\and Jie Wang\thanks{Academy of Mathematics and Systems Science, Chinese academy of sciences, Beijing 100190, China (\email{wangjie212@amss.ac.cn}).}
\and Jianhao Zheng\thanks{School of Mathematical Sciences, Dalian University of Technology, Dalian 116024, Liaoning Province, China.}}
\begin{document}

\maketitle

\begin{abstract}
This paper is devoted to the problem of minimizing a sum of rational functions over a basic semialgebraic set. We provide a hierarchy of sum of squares (SOS) relaxations that is dual to the generalized moment problem approach due to Bugarin, Henrion, and Lasserre. The investigation of the dual SOS aspect offers two benefits: 1) it allows us to conduct a convergence rate analysis for the hierarchy; 2) it leads to a sign symmetry adapted hierarchy consisting of block-diagonal semidefinite relaxations. When the problem possesses correlative sparsity as well as sign symmetries, we propose sparse semidefinite relaxations by exploiting both structures. Various numerical experiments are performed to demonstrate the efficiency of our approach. Finally, an application to maximizing sums of generalized Rayleigh quotients is presented.
\end{abstract}

\begin{keywords}
sum of rational functions, sign symmetry, semidefinite relaxation, correlative sparsity, generalized Rayleigh quotient
\end{keywords}

\begin{MSCcodes}
90C23, 90C22, 90C26
\end{MSCcodes}

\section{Introduction}
In this paper, we consider the optimization problem of minimizing a sum of rational functions:
\begin{equation}\label{eq::pro}
\rho\coloneqq\inf_{x\in\K} \sum_{i=1}^N \frac{p_i(\bx)}{q_i(\bx)},
\tag{SRFO}
\end{equation}
where 
\begin{equation}\label{eq::K}
\K\coloneqq\{\bx\in\RR^n \mid g_j(\bx)\ge 0,\ j=1,\ldots,m\},
\end{equation}
and $p_i, q_i, g_j$ are polynomials in variables $\bx=(x_1,\ldots,x_n)$. Throughout the paper, we make the following assumption on \eqref{eq::pro}:
\begin{assumption}\label{as::0}
(i) $\K$ is compact; (ii) $q_i>0$ on $\K$ for $i=1,\ldots,N$.	
\end{assumption}
The problem \eqref{eq::pro} has applications in various fields, including computer vision \cite{hartley2013verifying,kahl2008practical}, multi-user MIMO systems \cite{PSS2006}, sparse Fisher discriminant analysis in pattern recognition \cite{DFBSR,FN,WZWCL}. 

Given the potentially large number of terms $N$ and the absence of convexity (resp. concavity) assumptions on $p_i$ (resp. $q_i$), globally solving \eqref{eq::pro} or achieving a close approximation of the optimum $\rho$ presents significant challenges. 
In literature, \eqref{eq::pro} is often called a {\itshape sum-of-ratios program} which stands as one of the most complex fractional programs encountered to date. For strategies of tackling sum-of-ratios programs under specific assumptions regarding concavity and linearity of ratios, the reader may refer to the survey \cite{SS2003} and \cite[Table 1]{WSB2008}.

In the scenario where $N=1$, by utilizing the polynomial structure in \eqref{eq::pro} and Positivstellens\"atze from real algebraic geometry, some hierarchies of semidefinite programming (SDP) relaxations were proposed in \cite{Jibetean06, Nie2008}. The basic idea is to maximizing a number $\gamma\in\RR$ subject to the nonnegativity of $p_1-\gamma q_1$ on $\K$ which can be ensured by certain sum of squares (SOS) representations. In \cite{GW2014}, the problem of minimizing a rational function was reformulated as a polynomial optimization problem and solved by the exact Jacobian SDP relaxation method proposed by Nie \cite{nie2013exact}.

For the case that $N>1$, one could attempt to reduce all rational functions $p_i/q_i$ 
to the same denominator and apply the hierarchies of SDP relaxations mentioned above.
However, due to the potentially large value of $N$, the resulting unified denominator 
may have a significantly high degree, which makes it impractical to even solve the 
first order relaxations of the hierarchies.
To overcome this difficulty, Bugarin, Henrion, and Lasserre \cite{BHL2016} reformulated \eqref{eq::pro} as an equivalent infinite-dimensional linear program which is a particular instance of the generalized moment problem (GMP) with $N$ unknown measures. The GMP can be relaxed into a hierarchy of SDPs which provides increasingly tight lower bounds on $\rho$. When a correlative sparsity pattern is present in the polynomial data of \eqref{eq::pro}, a sparse GMP reformulation for \eqref{eq::pro} and a corresponding hierarchy of sparse SDP relaxations were also proposed in \cite{BHL2016}.
By employing pushforward measures, Lasserre et al. \cite{LVMZ2021} gave an approach yielding convergent upper bounds on $\rho$.

\vskip 5pt
\noindent{\bf Contributions.} Our main contributions are summarized as follows:
\begin{enumerate}
\item By studying the Lagrange dual problem, we unveil that the core strategy of the GMP approach is to replace the terms $p_i/q_i$, $i=2,\ldots,N$ by polynomial approximations from below on $\K$, leading to a problem with a single denominator. As a result, we provide a hierarchy of SOS relaxations that is dual to the GMP approach due to Bugarin, Henrion, and Lasserre. 
Moreover, we are able to conduct a convergence rate analysis for the hierarchy of SDP relaxations for \eqref{eq::pro}. 

\item We present a sign symmetry adapted hierarchy consisting of block-diagonal SDP relaxations for \eqref{eq::pro}. Furthermore, when both correlative sparsity and sign symmetries are present in the polynomial data of \eqref{eq::pro}, we show that the exploitation of sign symmetries can be naturally incorporated into the sparse hierarchy of SDP relaxations for \eqref{eq::pro}, thereby further reducing the computational cost. Various numerical experiments demonstrate the efficiency of our approach.
\end{enumerate}

\vskip 7pt
The rest of the paper is organized as follows. 
We first recall some preliminaries and the GMP approach for \eqref{eq::pro}
in Section \ref{sec::pre}. In Section \ref{sec::dual}, we investigate the dual aspect 
of the GMP approach and conduct convergence rate analysis for the SDP hierarchy.
In Section \ref{sec::sparsity}, we develop sparse SDP relaxations for \eqref{eq::pro} 
by exploiting sign symmetries as well as correlative sparsity present in the problem data. Numerical experiments are presented in Section \ref{sec::num} and an application to maximizing sums of generalized Rayleigh quotients is provided in Section \ref{app}. 

\section{Notation and Preliminaries}\label{sec::pre}
Let $\N$ denote the set of nonnegative integers. For $n\in\N\setminus\{0\}$, let $[n]\coloneqq\{1,2,\ldots,n\}$. For $k\in\N$, let $\N^n_k\coloneqq\{\ba=(\alpha_i)\in\N^n\mid \sum_{i=1}^n\alpha_i\le k\}$. We use $|\cdot|$ to denote the cardinality of a set. Let
$\RR[\bx]\coloneqq\RR[x_1,\ldots,x_n]$ be the ring of multivariate polynomials in $n$ variables $\bx$, and $\RR[\bx]_k$ denote the subset of polynomials of degree no greater than $k$. A polynomial $f\in\RR[\bx]$ can be written as $f=\sum_{\ba\in\N^n}f_{\ba}\bx^{\ba}$ with $f_{\ba}\in\RR$ and $\bx^{\ba}\coloneqq x_1^{\alpha_1}\cdots x_n^{\alpha_n}$. The support of $f$ is defined by $\supp{f}\coloneqq\{\ba\in\N^n\mid f_{\ba}\ne0\}$. For $\bA\subseteq\N^n$, $\RR[\bA]$ denotes the set of polynomials with supports contained in $\bA$, i.e., $\RR[\bA]\coloneqq\{f\in\RR[\bx]\mid\supp{f}\subseteq\bA\}$.
For $\bu=(u_1,\ldots,u_n)\in \RR^n$, $\Vert \bu\Vert$ denotes the standard Euclidean norm of $\bu$.
For $t\in\re$, we use $\lceil t\rceil$ to denote the smallest integer that is not smaller than $t$.
We use $A\succeq 0$ to indicate that the matrix $A$ is positive semidefinite. For two matrices $A,B$ of the same size, let $A\circ B$ denote the Hadamard product, defined by $[A\circ B]_{ij} = A_{ij}B_{ij}$.

\subsection{Sums of squares and moments}
We recall some background about SOS polynomials
and the dual theory of {\itshape moment matrices}.
A polynomial $f(\bx)\in\RR[\bx]$ is said to be a
\emph{sum of squares} if it can be written as $f(\bx)=\sum_{i=1}^t
f_i(\bx)^2$ for some $f_1(\bx),\ldots,f_t(\bx)\in\RR[\bx]$. Let
$\Sigma[\bx]$ denote the set of SOS polynomials in $\RR[\bx]$.

Let $\bg\coloneqq\{g_1,\ldots,g_m\}$ be the set of polynomials that defines the
semialgebraic set $\K$ in \eqref{eq::K}. We denote by
\[
  \mathcal{Q}(\bg)\coloneqq\left\{\sigma_0+\sum_{j=1}^m\sigma_jg_j\ \middle|\ 
\sigma_j \in \Sigma[\bx], j\in\{0\}\cup[m] \right\}
\]
the {\itshape quadratic module} generated by $\bg$ and denote by
\[
\Qk(\bg)\coloneqq\left\{\sigma_0+\sum_{j=1}^m\sigma_jg_j\ \middle|\ \sigma_0,\sigma_j \in \Sigma[\bx], \deg(\sigma_0),\deg(\sigma_jg_j)\le 2k, j\in[m]\right\}
\]
the $k$-th {\itshape truncated quadratic module}.
It is clear that if $f\in\mathcal{Q}(\bg)$, then
$f(\bx)\ge 0$ for any $\bx\in \K$ though the converse is not necessarily
true.

Given a (pseudo-moment) sequence of real numbers $\by\coloneqq(y_{\ba})_{\ba\in \mathbb{N}^n}$,
the $k$-th order {\itshape moment matrix} is the matrix $\bM_k(\by)$
indexed by $\mathbb{N}^n_{k}$ with the $(\ba,\bb)$-th entry being $y_{\ba+\bb}$. Given a polynomial $f(\bx)=\sum_{\ba}f_{\ba}\bx^{\ba}$, the $k$-th order {\itshape localizing
matrix} $\bM_{k}(f\by)$ indexed by $\mathbb{N}^n_{k}$ is defined by $[\bM_{k}(f\by)]_{\bb,\bbg}=\sum_{\ba}f_{\ba}y_{\ba+\bb+\bbg}$.
The Riesz functional $L_{\by}$ on $\RR[\bx]$ is defined by $L_{\by}\left(\sum_{\ba}f_{\ba}\bx^{\ba}\right)
\coloneqq\sum_{\ba}f_{\ba}y_{\ba}$.
\begin{definition}[Archimedean condition]\label{def::AC}
We say that $\mathcal{Q}(\bg)$ is {\itshape Archimedean} if
there exists $M>0$ such that $M-x_1^2-\cdots-x_n^2\in\mathcal{Q}(\bg)$.
\end{definition}
Note that the Archimedean condition implies that $\K$ is compact and the converse is not necessarily true. However, for any compact set $\K$, we could always
force the associated quadratic module to be Archimedean
by adding a redundant constraint $M-x_1^2-\cdots-x_n^2\ge 0$ in the description of
$\K$ for sufficiently large $M$.

\begin{theorem}\label{th::PP}{\upshape\cite[{Putinar's Positivstellensatz\/}]{Putinar1993}}
Suppose that $\mathcal{Q}(\bg)$ is Archimedean.
If a polynomial $f\in\RR[\bx]$ is positive on $\K$, then $f\in\Qk(\bg)$ for some $k\in\N$.
\end{theorem}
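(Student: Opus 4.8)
The plan is to prove the statement by a functional-analytic duality argument, in the operator-theoretic style of Putinar's original paper. Note first that the conclusion ``$f\in\Qk(\bg)$ for some $k$'' is equivalent to membership $f\in\mathcal{Q}(\bg)$: any representation $\sigma_0+\sum_{j=1}^m\sigma_jg_j$ involves only finitely many squares of bounded degree, so it lands in $\Qk(\bg)$ for $k$ large enough. I would therefore prove $f\in\mathcal{Q}(\bg)$ by contradiction: assuming $f>0$ on $\K$ but $f\notin\mathcal{Q}(\bg)$, I would manufacture a probability measure supported on $\K$ against which $f$ integrates to a nonpositive number, which is absurd.

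First I would record the algebraic consequence of the Archimedean hypothesis that powers the whole argument. Since $M-\sum_i x_i^2\in\mathcal{Q}(\bg)$, adding squares gives $M-x_i^2\in\mathcal{Q}(\bg)$ for each $i$, and from $(1\mp x_i)^2+(M-x_i^2)=(1+M)\mp 2x_i\in\mathcal{Q}(\bg)$ one gets $\lambda\pm x_i\in\mathcal{Q}(\bg)$ for suitable $\lambda>0$. By induction on degree, using the identity $2ab\le a^2+b^2$ encoded by squares to handle products, one shows that for every $p\in\RR[\bx]$ there is $\lambda>0$ with $\lambda\pm p\in\mathcal{Q}(\bg)$. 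In other words, the constant $1$ is an order unit (an algebraic interior point) of the convex cone $\mathcal{Q}(\bg)\subseteq\RR[\bx]$.

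With $1$ an order unit, I would separate $f$ from $\mathcal{Q}(\bg)$ using the Eidelheit/Hahn--Banach separation theorem, producing a linear functional $L\colon\RR[\bx]\to\RR$ that is nonnegative on $\mathcal{Q}(\bg)$, satisfies $L(f)\le 0$, and is normalized so that $L(1)=1$. The form $\langle p,q\rangle\coloneqq L(pq)$ is then positive semidefinite (as $p^2\in\Sigma[\bx]\subseteq\mathcal{Q}(\bg)$), and the bound $M-x_i^2\in\mathcal{Q}(\bg)$ makes each multiplication operator $p\mapsto x_i p$ bounded, since $\langle x_i p, x_i p\rangle=L(x_i^2p^2)\le M\,L(p^2)=M\langle p,p\rangle$. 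Quotienting by the null space and completing, I would obtain commuting bounded self-adjoint operators $X_1,\dots,X_n$ on a Hilbert space $H$, with a cyclic vector $\mathbf 1$.

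The decisive and hardest step is the spectral realization. By the spectral theorem for the commuting tuple $(X_1,\dots,X_n)$ there is a joint spectral measure $E$ on $\RR^n$, and $\mu(\cdot)\coloneqq\langle E(\cdot)\mathbf 1,\mathbf 1\rangle$ is a probability measure with $L(h)=\int h\,\ud\mu$ for all $h\in\RR[\bx]$. The support of $\mu$ lies in $\K$: for each $j$, nonnegativity of $L$ on $\sigma g_j$ for every SOS $\sigma$ forces $g_j(X)\succeq 0$ as an operator, whence $E$ is concentrated on $\{g_j\ge 0\}$. Consequently $L(f)=\int_{\K}f\,\ud\mu>0$, because $f>0$ on $\K$ and $\mu$ is a probability measure supported there, contradicting $L(f)\le 0$. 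I expect the main obstacle to be exactly this implementation of the multivariate spectral theorem: checking that the $X_i$ truly commute and are self-adjoint on the completed space, and that the localization $g_j(X)\succeq 0\Rightarrow\supp{\mu}\subseteq\{g_j\ge 0\}$ holds. This is precisely where the Archimedean hypothesis is indispensable, as it guarantees the operators are bounded and hence that the spectral measure is compactly supported.
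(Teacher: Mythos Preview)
The paper does not supply a proof of this theorem; it is quoted with a citation to Putinar's 1993 paper and used as a black box. There is therefore nothing in the paper to compare your argument against.

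That said, your outline is correct and is essentially Putinar's original operator-theoretic proof. The reduction of ``$f\in\Qk(\bg)$ for some $k$'' to ``$f\in\mathcal{Q}(\bg)$'' is immediate. Your derivation that $1$ is an order unit is the standard one (the set of $p$ with $\lambda\pm p\in\mathcal{Q}(\bg)$ for some $\lambda$ is a subalgebra containing each $x_i$, hence all of $\RR[\bx]$). Separation by Eidelheit/Hahn--Banach then yields the normalized positive functional $L$, and the GNS construction produces commuting bounded self-adjoint operators---boundedness being precisely where the Archimedean hypothesis is consumed. The step you correctly identify as the delicate one, namely that $L(\sigma g_j)\ge 0$ for all SOS $\sigma$ forces $g_j(X)\succeq 0$ and hence $\supp\mu\subseteq\{g_j\ge 0\}$, goes through because for a commuting tuple of bounded self-adjoint operators the Borel functional calculus gives $g_j(X)=\int g_j\,\ud E$, so operator positivity is equivalent to $g_j\ge 0$ $E$-almost everywhere. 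One small point: to conclude $g_j(X)\succeq 0$ you need $\langle g_j(X)v,v\rangle\ge 0$ for all $v\in H$, not just for $v=p\cdot\mathbf 1$; this follows by density of $\RR[\bx]\cdot\mathbf 1$ since $\mathbf 1$ is cyclic and $g_j(X)$ is bounded.
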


\subsection{The GMP reformulation and SDP relaxations}
In \cite{BHL2016}, Bugarin et al. reformulated \eqref{eq::pro} as the following GMP:
\begin{equation*}\label{eq::gmp}
\left\{\begin{aligned}
		\inf_{\mu_i\in\mathcal{M}(\K)_+}&\ \sum_{i=1}^N \int_{\K}
		p_i\ud \mu_i\\
		\text{s.t.}\quad\,&\ \int_{\K} q_1\ud\mu_1=1,\\
		&\ \int_{\K}
		\bx^{\ba}q_i\ud\mu_i=\int_{\K}\bx^{\ba}q_1\ud\mu_1,\quad\forall
		\ba\in\N^n, i\in[N]\setminus\{1\},
	\end{aligned}\right.\tag{P}
\end{equation*}
where $\mathcal{M}(\K)_+$ denotes the set of finite positive Borel measures
supported on $\K$.

Let 
\[\begin{aligned}
       d_j&\coloneqq\lceil\deg(g_j)/2\rceil,\quad j\in[m],\\
       d_{\min}&\coloneqq\max\,\{\lceil\deg(p_i)/2\rceil, \lceil\deg(q_i)/2\rceil, i\in [N]; \,d_j, j\in[m]\}.
\end{aligned}\]
Based on the reformulation \eqref{eq::gmp}, Bugarin et al. further proposed the following hierarchy of SDP relaxations for \eqref{eq::pro} ($k\ge d_{\min}$):
\begin{equation*}\label{eq::momentsdp}
\left\{
\begin{aligned}
\inf_{\by_i}&\ \sum_{i=1}^N L_{\by_{i}}(p_i)\\
\text{s.t.}&\ \bM_k(\by_i)\succeq 0,\quad  i\in[N],\\
&\ \bM_{k-d_j}(g_j\by_i)\succeq 0,\quad i\in[N],j\in[m],\\
&\ L_{\by_1}(q_1)=1,\\
&\ L_{\by_{i}}(\bx^{\ba}q_i)=L_{\by_1}(\bx^{\ba}q_1),\quad \forall
\ba\in\N^n_{2k-\max\{\deg(q_1),\deg(q_i)\}},i\in[N]\setminus\{1\}.
\end{aligned}\right.\tag{P$k$}
\end{equation*}
It was shown in \cite{BHL2016} that under Assumption \ref{as::0} and the Archimedean condition, the sequence of optima of \eqref{eq::momentsdp} converges to $\rho$ as $k\to\infty$.

\section{The dual perspective and convergence rate analysis}\label{sec::dual}
In this section, we will unveil the underlying principle of the GMP reformulation \eqref{eq::gmp} for \eqref{eq::pro} from the dual perspective, which enables us to achieve a convergence rate analysis of the hierarchy of SDP relaxations for \eqref{eq::pro}.

\subsection{The dual perspective}
Let us derive the Lagrange dual problem of the GMP reformulation \eqref{eq::gmp} for \eqref{eq::pro}. Note that there are infinitely many
constraints involved in \eqref{eq::gmp}.
To formulate the dual problem, we need to embed these constraints into an appropriate functional space paired with a dual space \cite{shapiro2009semi}. Let
$\mathcal{Y}$ be the space of all functions $\omega:
\N^n\to\RR$ equipped with natural algebraic operations of addition and
multiplication by a scalar.
We associate this space with the dual space $\mathcal{Y}^*$ consisting of
functions $\omega^*: \N^n\to\RR$ such that only a finite number of
values $\omega^*(\ba), \ba\in\N^n$ are nonzero. For
$\omega\in\mathcal{Y}$ and $\omega^*\in\mathcal{Y}^*$, define the
scalar product 
\[\langle \omega^*, \omega\rangle\coloneqq\sum_{\ba\in\N^n}
\omega^*(\ba)\omega(\ba),
\]
where the summation is performed over $\ba$
in the finite support set of $\omega^*$. Equivalently, we can take
$\mathcal{Y}^*$ to be the polynomial ring $\RR[\bx]$. 
For any
$h(\bx)=\sum_{\ba\in\N^n}h_{\ba}\bx^{\ba}\in\RR[\bx]$ and
$\omega\in\mathcal{Y}$, define the scalar product 
\[\langle h, \omega\rangle\coloneqq\sum_{\ba\in\N^n}h_{\ba}\omega(\ba).
\]
Clearly, for each $i\in[N]\setminus\{1\}$, the mapping $\omega_i:\N^n\to \RR$
defined by 
\[
\omega_i(\ba)\coloneqq\int_{\K}
\bx^{\ba}q_i\ud\mu_i-\int_{\K}\bx^{\ba}q_1\ud\mu_1, \ \ \forall
\ba\in\N^n,
\]
belongs to the space $\mathcal{Y}$, and then the second part of constraints of
\eqref{eq::gmp} can be expressed as $\omega_i=0,i\in[N]\setminus\{1\}$. Let $h_i=\sum_{\ba\in\N^n}h^i_{\ba}\bx^{\ba}\in\RR[\bx]$ be the dual variable associated with the constraint $\omega_i=0,i\in[N]\setminus\{1\}$,
and let $c$ be the dual variable associated with the constraint $\int_{\K} q_1\ud\mu_1=1$.
The Lagrangian of \eqref{eq::gmp} now can be written as
\[\begin{aligned}
		\mathcal{L}(\{\mu_i\}, \{h_i\}, c)\coloneqq&\sum_{i=1}^N
		\int_{\K} p_i\ud
		\mu_i-c\left(\int_{\K}q_1\ud\mu_1-1\right)-\sum_{i=2}^N\langle h_i, 
            \omega_i\rangle\\
            =&\,c+\sum_{i=1}^N
		\int_{\K} p_i\ud
		\mu_i-\int_{\K}cq_1\ud\mu_1-\sum_{i=2}^N\sum_{\ba\in\N^n}
		h^i_{\ba}\left(\int_{\K}
		\bx^{\ba}q_i\ud\mu_i-\int_{\K}\bx^{\ba}q_1\ud\mu_1\right)\\
		=&\,c+\int_{\K}
		\left(p_1-cq_1+q_1\sum_{i=2}^Nh_i\right)\ud\mu_1
		+\sum_{i=2}^N \int_{\K} \left(p_i-q_i h_i\right)\ud\mu_i.
\end{aligned}\]
The Lagrange dual function of \eqref{eq::gmp} is 
\[\inf_{\mu_i\in\mathcal{M}(\K)_+}\mathcal{L}(\{\mu_i\}, \{h_i\}, c)=\left\{
\begin{array}{ll}
				c, & \text{if}\ p_1-q_1(c-\sum_{i=2}^Nh_i)
				\ge 0,\ p_i-q_i h_i\ge 0\ \text{on}\ \K,\\
				-\infty, &\text{otherwise}.
\end{array}\right.\]
Hence, the Lagrange dual problem of \eqref{eq::gmp} is 
\begin{equation*}\label{eq::gmpdual}
\left\{
	\begin{aligned}
		\sup_{c,h_i}&\ c\\
		\text{s.t.}&\ p_1(\bx)+\left(\sum_{i=2}^Nh_i(\bx)-c\right)q_1(\bx)\ge0,\quad\forall\bx\in\K,\\
		&\ p_i(\bx)-h_i(\bx)q_i(\bx)\ge0,\quad\forall \bx\in\K,i\in[N]\setminus\{1\},\\
  &\ h_i\in\RR[\bx],\quad i\in[N]\setminus\{1\}.
	\end{aligned}
	\right.\tag{D}
\end{equation*}
For each $i\in[N]\setminus\{1\}$, define 
\[\mathcal{H}_i\coloneqq\left\{h\in\RR[\bx]\,\middle|\,
		\frac{p_i(\bx)}{q_i(\bx)}\ge h(\bx),\ \ \forall
	\bx\in\K\right\}.\]
Since each $q_i>0$ on $\K$, the Lagrange dual problem of \eqref{eq::gmp} can be also expressed as
\[\sup_{h_i\in\mathcal{H}_i}\inf_{\bx\in\K}\
\frac{p_1(\bx)}{q_1(\bx)}+\sum_{i=2}^Nh_i(\bx).\]
Consequently, the underlying principle of the GMP
reformulation \eqref{eq::gmp} for \eqref{eq::pro}, derived from the dual aspect,
can be unfolded as follows: 1) replacing the terms
$\frac{p_i}{q_i}$, $i\in[N]\setminus\{1\}$ by polynomial approximations
$h_i\in\RR[\bx]$ from below on $\K$; 2) computing the infimum of
the resulting function which contains only a single denominator; 3) letting the approximations $h_i$'s vary and taking the supremum.

\begin{theorem}\label{th::dualgmpeq}
Under Assumption \ref{as::0}, the optimum of \eqref{eq::gmpdual} equals $\rho$.
\end{theorem}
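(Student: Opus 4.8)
The plan is to prove the claimed equality by establishing the two inequalities $\theta\le\rho$ and $\theta\ge\rho$ separately, where $\theta$ denotes the optimum of \eqref{eq::gmpdual}. I would work throughout with the equivalent form
\[
\theta=\sup_{h_i\in\mathcal{H}_i}\ \inf_{\bx\in\K}\left(\frac{p_1(\bx)}{q_1(\bx)}+\sum_{i=2}^Nh_i(\bx)\right)
\]
derived just above the statement. Before anything else I would record that, by Assumption \ref{as::0}, each ratio $p_i/q_i$ is a well-defined continuous function on the compact set $\K$, so the objective of \eqref{eq::pro} is continuous on $\K$ and $\rho$ is finite (indeed attained).

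For the upper bound (weak duality), fix any feasible tuple $(h_i)_{i=2}^N$ with $h_i\in\mathcal{H}_i$. By definition of $\mathcal{H}_i$ we have $h_i(\bx)\le p_i(\bx)/q_i(\bx)$ for every $\bx\in\K$, so pointwise on $\K$
\[
\frac{p_1(\bx)}{q_1(\bx)}+\sum_{i=2}^Nh_i(\bx)\ \le\ \sum_{i=1}^N\frac{p_i(\bx)}{q_i(\bx)}.
\]
Taking the infimum over $\bx\in\K$ on both sides bounds the left-hand inner infimum by $\rho$, and taking the supremum over all feasible $(h_i)$ yields $\theta\le\rho$.

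The substantive direction is the reverse inequality, and this is where I expect the main work to lie. The idea is to approximate each $p_i/q_i$ from below by a polynomial as tightly as we wish. Fix $\varepsilon>0$. Since $p_i/q_i$ is continuous on the compact set $\K$, the Stone--Weierstrass theorem provides a polynomial $\widetilde h_i\in\RR[\bx]$ with $\sup_{\bx\in\K}\bigl|p_i(\bx)/q_i(\bx)-\widetilde h_i(\bx)\bigr|<\varepsilon$. Setting $h_i:=\widetilde h_i-\varepsilon$ guarantees $h_i(\bx)<p_i(\bx)/q_i(\bx)$ on $\K$, so $h_i\in\mathcal{H}_i$ is feasible, while simultaneously $h_i(\bx)>p_i(\bx)/q_i(\bx)-2\varepsilon$ there. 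Consequently, for all $\bx\in\K$,
\[
\frac{p_1(\bx)}{q_1(\bx)}+\sum_{i=2}^Nh_i(\bx)\ >\ \sum_{i=1}^N\frac{p_i(\bx)}{q_i(\bx)}-2(N-1)\varepsilon\ \ge\ \rho-2(N-1)\varepsilon,
\]
so the inner infimum associated with this feasible tuple is at least $\rho-2(N-1)\varepsilon$.

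Taking the supremum over feasible tuples gives $\theta\ge\rho-2(N-1)\varepsilon$, and letting $\varepsilon\to0$ produces $\theta\ge\rho$. Combining the two inequalities yields $\theta=\rho$, as claimed. The only nontrivial ingredient is the uniform polynomial approximation step, which is precisely where both parts of Assumption \ref{as::0} enter: compactness of $\K$ makes Stone--Weierstrass applicable, and positivity of each $q_i$ on $\K$ is what makes $p_i/q_i$ continuous there to begin with. Everything else reduces to the elementary monotonicity of infima and suprema under pointwise inequalities.
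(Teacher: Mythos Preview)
Your proof is correct and follows essentially the same approach as the paper: both establish $\theta\le\rho$ by weak duality and then use Stone--Weierstrass on the compact set $\K$ to approximate each $p_i/q_i$ uniformly by a polynomial, shift it down slightly to obtain feasibility in $\mathcal{H}_i$, and let the approximation error tend to zero. The only cosmetic difference is that the paper chooses the approximation accuracy to be $\varepsilon/(2(N-1))$ so that the final error is exactly $\varepsilon$, whereas you use $\varepsilon$ and end up with $2(N-1)\varepsilon$; this is immaterial.
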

\begin{proof}
Denote the optimum of \eqref{eq::gmpdual} by $\tau$.
Clearly, we have $\tau\le \rho$. Fix an arbitrary $\varepsilon>0$. By the Stone-Weierstrass theorem, for each $i\in[N]\setminus\{1\}$, there exists $\hat{h}_i\in\RR[\bx]$ such that
\[\sup_{\bx\in\K}\left|\frac{p_i(\bx)}{q_i(\bx)}-\hat{h}_i(\bx)\right|\le
\frac{\varepsilon}{2(N-1)}.\]
Letting $h_i\coloneqq \hat{h}_i-\varepsilon/2(N-1)$, we have 
\[\frac{p_i(\bx)}{q_i(\bx)}-\frac{\varepsilon}{N-1}\le h_i(\bx)\le \frac{p_i(\bx)}{q_i(\bx)},\quad\forall \bx\in\K.\]
Hence, $h_i\in\mathcal{H}_i$ and
\[\tau\ge \inf_{\bx\in\K}\frac{p_1(\bx)}{q_1(\bx)}+\sum_{i=2}^N h_i(\bx)\ge
\inf_{\bx\in\K}\sum_{i=1}^N\frac{p_i(\bx)}{q_i(\bx)}-\varepsilon\ge
\rho-\varepsilon.\]
Thus, $(\rho-\varepsilon, h_2,\ldots,h_N)$ is feasible to \eqref{eq::gmpdual}.
As $\varepsilon$ is arbitrary, we obtain $\tau=\rho$.
\end{proof}

\begin{remark}\label{rk::duality}
Theorem \ref{th::dualgmpeq}	can also be derived from strong duality between \eqref{eq::gmp} and \eqref{eq::gmpdual}. In fact, as $\K$ is compact, there exist
{\itshape constant} polynomials $\hat{h}_i$'s and $\hat{c}\in\RR$ such that $p_i/q_i>\hat{h}_i$, $i\in[N]\setminus\{1\}$, and
$p_1/q_1+\sum_{i=2}^N\hat{h}_i>\hat{c}$ on $\K$.
That is, $(\hat{c},\hat{h}_2,\ldots,\hat{h}_N)$ is a strictly feasible solution of \eqref{eq::gmpdual} and hence there is no dual gap between \eqref{eq::gmp} and \eqref{eq::gmpdual}.
\end{remark}

By truncating the polynomial degree, we obtain the following hierarchy of dual SOS relaxations for \eqref{eq::pro} ($k\ge d_{\min}$):
\begin{equation*}\label{eq::gmpdualsos}
\rho_k\coloneqq\left\{
	\begin{aligned}
		\sup_{c, h_i}&\ c\\
		\text{s.t.}&\
		p_1(\bx)+\left(\sum_{i=2}^Nh_i(\bx)-c\right)q_1(\bx)\in Q_k(\bg),\\
		&\ p_i(\bx)-h_i(\bx)q_i(\bx)\in Q_k(\bg),\quad i\in[N]\setminus\{1\},\\
            &\ h_i\in\RR[\bx]_{2k-\max\{\deg(q_1),\deg(q_i)\}},\quad i\in[N]\setminus\{1\}.
	\end{aligned}
	\right.\tag{D$k$}
\end{equation*}
\begin{theorem}
Under Assumption \ref{as::0} and the Archimedean condition, it holds $\rho_k\nearrow \rho$ as $k\to\infty$.
\end{theorem}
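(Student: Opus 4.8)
The plan is to prove the claim in three parts: that the sequence $(\rho_k)$ is nondecreasing, that it is bounded above by $\rho$, and that its limit is at least $\rho$; combining these yields $\rho_k\nearrow\rho$. The first two parts should be routine. Monotonicity will follow because enlarging $k$ only enlarges the feasible set of \eqref{eq::gmpdualsos}: the inclusions $Q_k(\bg)\subseteq Q_{k+1}(\bg)$ and $\RR[\bx]_{2k-\max\{\deg(q_1),\deg(q_i)\}}\subseteq\RR[\bx]_{2(k+1)-\max\{\deg(q_1),\deg(q_i)\}}$ show that every level-$k$ feasible point remains feasible at level $k+1$, whence $\rho_k\le\rho_{k+1}$. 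The upper bound $\rho_k\le\rho$ will follow because any $f\in Q_k(\bg)$ is nonnegative on $\K$, so each feasible point of \eqref{eq::gmpdualsos} is also feasible for \eqref{eq::gmpdual}, whose optimum is $\rho$ by Theorem \ref{th::dualgmpeq}.

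The substance of the argument is the lower bound, and here the strategy is to turn the near-feasible point constructed in the proof of Theorem \ref{th::dualgmpeq} into a genuinely feasible point of some finite relaxation \eqref{eq::gmpdualsos}, using Putinar's Positivstellensatz (Theorem \ref{th::PP}). Fix $\varepsilon>0$. I would first invoke the Stone--Weierstrass theorem to obtain polynomials $\hat h_i$ approximating $p_i/q_i$ uniformly on $\K$ to within $\varepsilon/(4(N-1))$, and then shift them strictly downward, setting $h_i\coloneqq\hat h_i-\varepsilon/(2(N-1))$ and $c\coloneqq\rho-\varepsilon$. The purpose of the downward shift is that, using $q_i>0$ on $\K$ (Assumption \ref{as::0}(ii)) together with $\sum_{i=1}^N p_i/q_i\ge\rho$ on $\K$, both polynomials $p_i-h_iq_i$ and $p_1+(\sum_{i=2}^N h_i-c)q_1$ become \emph{strictly} positive on $\K$ rather than merely nonnegative.

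With strict positivity in hand, I would apply Theorem \ref{th::PP} to each of these finitely many polynomials to place them in $Q_{k^\ast}(\bg)$ for a common, sufficiently large $k^\ast$; enlarging $k^\ast$ if needed, I would also ensure $k^\ast\ge d_{\min}$ and that the fixed-degree $h_i$ satisfy the degree cap $2k^\ast-\max\{\deg(q_1),\deg(q_i)\}\ge\deg(h_i)$ imposed in \eqref{eq::gmpdualsos}. Then $(c,h_2,\ldots,h_N)$ is feasible for \eqref{eq::gmpdualsos} at level $k^\ast$, so $\rho_{k^\ast}\ge c=\rho-\varepsilon$; combined with monotonicity and $\rho_k\le\rho$, this forces $\rho-\varepsilon\le\rho_k\le\rho$ for all $k\ge k^\ast$, and letting $\varepsilon\to 0$ completes the argument.

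I expect the main obstacle to be the $\varepsilon$-bookkeeping that upgrades the Stone--Weierstrass approximation to strict inequalities. The naive choice $c=\rho$ with $h_i$ equal to the best approximant would yield only nonnegativity on $\K$, which is insufficient for Putinar's theorem. Slackening both $c$ and each $h_i$ by amounts on the order of $\varepsilon$---while keeping the total slack small enough that $c$ stays within $\varepsilon$ of $\rho$---is the delicate step that secures strict positivity, and hence membership in a finite truncated quadratic module.
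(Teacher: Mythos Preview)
Your proposal is correct and follows essentially the same approach as the paper: monotonicity and the upper bound are handled identically, and for the lower bound both you and the paper use Stone--Weierstrass approximants of $p_i/q_i$, shift them strictly downward to secure strict positivity of $p_i-h_iq_i$ and of $p_1+(\sum_{i\ge2}h_i-c)q_1$ on $\K$, and then invoke Putinar's Positivstellensatz to land in some $Q_{k_\varepsilon}(\bg)$. The only cosmetic difference is in the bookkeeping constants (the paper routes through $\bar\varepsilon=\varepsilon/3$ and the construction of Theorem~\ref{th::dualgmpeq}, whereas you work directly with $\varepsilon/(4(N-1))$ and $\varepsilon/(2(N-1))$), but the argument is the same.
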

\begin{proof}
It is obvious that $\rho_k\le \rho_{k+1}\le \rho$ for all $k\ge d_{\min}$.
Fix an arbitrary $\varepsilon>0$ and we will prove that there exists $k_{\varepsilon}\in\N$ such that $\rho_{k_{\bar{\varepsilon}}}\ge \rho-\varepsilon$. For $\bar{\varepsilon}\coloneqq\varepsilon/3$, let
$(\rho-\bar{\varepsilon}, \bar{h}_2,\ldots,\bar{h}_N)$ be the feasible solution to \eqref{eq::gmpdual} provided in the proof of Theorem \ref{th::dualgmpeq}. For each $i=2,\ldots,N$,
let 
\[h_i\coloneqq \bar{h}_i-\frac{\bar{\varepsilon}}{N-1}.\]
Then, for $\bx\in\K$, it holds that
\begin{equation}\label{eq::positive}
p_i(\bx)-h_i(\bx)q_i(\bx)=p_i(\bx)-\bar{h}_i(\bx)q_i(\bx)+\frac{\bar{\varepsilon}}{N-1}q_i(\bx)\ge\frac{\bar{\varepsilon}}{N-1}q_i(\bx)>0.
\end{equation}
By Theorem \ref{th::PP}, there exists $k_i\in\N$ such that $h_i\in\RR[\bx]_{2k_i-\deg(q_i)}$ and $p_i-h_iq_i\in Q_{k_i}(\bg)$. Moreover, for $\bx\in\K$, we have 
\[\begin{aligned}
&p_1(\bx)+\left(\sum_{i=2}^N h_i(\bx)-(\rho-\varepsilon)\right)q_1(\bx)\\
=\,&p_1(\bx)+q_1(\bx)\sum_{i=2}^N \bar{h}_i(\bx)-\bar{\varepsilon} q_1(\bx)-\rho q_1(\bx)+3\bar{\varepsilon} q_1(\bx)\\
\ge\, & (\rho-\bar{\varepsilon})q_1(\bx)-\rho q_1(\bx)+2\bar{\varepsilon} q_1(\bx)
\ge \bar{\varepsilon} q_1(\bx)>0. 
\end{aligned}\]
By Theorem \ref{th::PP} again, there exists $k_1\in\N$
such that
\[p_1(\bx)+\left(\sum_{i=2}^N h_i(\bx)-(\rho-\varepsilon)\right)q_1(\bx)
\in Q_{k_1}(\bg).\]	
Let $k_{\varepsilon}\coloneqq\max_{1\le i\le N} k_i$. Then 
$(\rho-\varepsilon, h_2,\ldots,h_N)$ is feasible to \eqref{eq::gmpdualsos} with $k=k_{\varepsilon}$,
which implies $\rho_{k_{\varepsilon}}\ge \rho-\varepsilon$. 
\end{proof}

\subsection{Convergence rate analysis}
The exploration of the dual aspect of the GMP reformulation \eqref{eq::gmp} allows us to 
perform a convergence rate analysis for the hierarchy of SDP relaxations for \eqref{eq::pro} 
by utilizing some existing results from the literature. 
\begin{lemma}\label{prop::jackson}
For each $i\in[N]\setminus\{1\}$, there exists a constant $c_i$ depending on
$p_i$, $q_i$ and $\K$ such that for any $k\in\N$, there exists
$h\in\RR[\bx]_k$ satisfying 
\[
	\sup_{\bx\in\K} \left|\frac{p_i(\bx)}{q_i(\bx)}-h(\bx)\right|\le
	\frac{c_i}{k}.
\]
\end{lemma}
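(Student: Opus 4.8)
The plan is to establish a uniform polynomial approximation rate for the continuous function $p_i/q_i$ on the compact set $\K$, which is a classical Jackson-type result. The key observation is that $p_i/q_i$ is a smooth (indeed, real-analytic) function on a neighborhood of $\K$, since $q_i > 0$ on $\K$ by Assumption \ref{as::0}(ii) and $\K$ is compact. First I would enlarge $\K$ slightly to a compact set $\K'$ on which $q_i$ remains strictly positive, so that $p_i/q_i$ extends to a function that is at least $C^1$ (in fact $C^\infty$) on $\K'$. The aim is then to invoke a quantitative polynomial approximation theorem whose rate depends on the smoothness of the function being approximated.

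The main step is to apply a multivariate Jackson-type inequality. For a function $f$ that is Lipschitz (or $C^1$) on a compact set contained in a cube $[-R,R]^n$, the best uniform approximation by polynomials of degree $k$ satisfies $\inf_{h\in\RR[\bx]_k}\sup_{\bx}\,\lvert f(\bx)-h(\bx)\rvert \le C/k$, where $C$ depends on the Lipschitz constant of $f$, on $R$, and on the dimension $n$. Since $p_i/q_i$ is $C^1$ on the enlarged set $\K'$, it is Lipschitz there with some constant $L_i$ determined by $\sup_{\K'}\lVert\nabla(p_i/q_i)\rVert$, and this supremum is finite because $q_i$ is bounded away from $0$ and $p_i, q_i$ have bounded derivatives on the compact $\K'$. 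Setting $c_i \coloneqq C\cdot(\text{constants depending on } p_i, q_i, \K)$ then yields the claimed bound $\sup_{\bx\in\K}\lvert p_i(\bx)/q_i(\bx)-h(\bx)\rvert \le c_i/k$ for a suitable $h\in\RR[\bx]_k$.

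The hard part is pinning down a correct statement of the multivariate Jackson inequality and verifying its hypotheses carefully; the one-dimensional theory is standard, but in several variables one must be mindful of how the constant depends on the geometry of $\K$. I would sidestep this by citing an existing quantitative approximation result (for instance a Jackson-type theorem for Lipschitz functions on a cube), noting that the constant $c_i$ absorbs the Lipschitz constant of $p_i/q_i$, the diameter of $\K$, and the dimension $n$ — all of which depend only on $p_i$, $q_i$, and $\K$ as required. The remaining verification, that $p_i/q_i$ is Lipschitz on (a neighborhood of) $\K$, is the routine computation sketched above and follows immediately from compactness together with $q_i > 0$ on $\K$.
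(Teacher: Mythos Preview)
Your proposal is correct and follows essentially the same approach as the paper: invoke a multivariate Jackson-type theorem to bound the best degree-$k$ polynomial approximation error by a constant times the modulus of continuity at scale $1/k$, then use that $p_i/q_i$ is Lipschitz on $\K$ (since $q_i>0$ on the compact set $\K$) to convert this into the $c_i/k$ bound. The paper states the Jackson inequality directly in terms of the modulus of continuity $\omega_{\K}(p_i/q_i,1/k)$ on $\K$ itself rather than passing to an enclosing cube, but the substance is the same.
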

\begin{proof}
Denote by 
\[
	\omega_{\K}\left(\frac{p_i}{q_i},t\right)\coloneqq\sup\left\{\left|\frac{p_i(\bx)}{q_i(\bx)}
	-\frac{p_i(\by)}{q_i(\by)}\right| \colon \bx,\by\in\K, \
\Vert\bx-\by\Vert\le t\right\}
\]
the standard modulus of continuity of $p_i/q_i$ on $\K$.
By the multivariate version of Jackson's theorem (see \cite{Timan1963}), there exists
a constant $\hat{c}_i$ depending on $p_i$, $q_i$ and $\K$ such that for any
$k\in\N$, there exists $h\in\RR[\bx]_k$ satisfying 
\[
	\sup_{\bx\in\K} \left|\frac{p_i(\bx)}{q_i(\bx)}-h(\bx)\right|\le
	\hat{c}_i \omega_{\K}\left(\frac{p_i}{q_i},\frac{1}{k}\right).
\]
As $q_i(\bx)>0$ on $\K$, $p_i/q_i$ is Lipschitz on $\K$. So there is
a constant $L_i$ such that $\omega_{\K}(p_i/q_i, t)\le L_it$. Then, the
conclusion follows by letting $c_i\coloneqq\hat{c}_iL_i$.
\end{proof}

\begin{assumption}\label{assump::0}
The origin belongs to the interior of $\K$.
\end{assumption}

The following result is a consequence of the
fundamental result \cite[Theorem 6]{NieSchweighofer} and
\cite[Corollary 1]{KORDA2017}.
\begin{theorem}\cite[Theorem 3]{KH2018}\label{th::d}
	Let the Archimedean condition and Assumption~\ref{assump::0}
	hold. If $\phi(\bx)\in\RR[\bx]$ is strictly positive on $\K$, then
	$\phi\in Q_k(\bg)$ whenever
	\[
		k\ge C_{\K}
		\exp{\left[\left(3^{\deg(\phi)+1}\kappa^{\deg(\phi)}(\deg(\phi))^2n^{\deg{\phi}}
		\frac{\max_{\bx\in\K}\phi(\bx)}{\min_{\bx\in\K}\phi(\bx)}\right)^{C_{\K}}\right]},
	\]
	for some constant $C_{\K}$ depending only on $g_j$'s and 
	\begin{equation}\label{eq::r}
		\kappa\coloneqq\frac{1}{\sup\,\{t>0 \colon [-t,\ t]^n\subseteq\K\}}.
	\end{equation}
\end{theorem}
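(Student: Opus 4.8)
The plan is to establish this quantitative Putinar representation by the standard three-stage route of effective real algebraic geometry, namely reducing to a cube, proving an effective representation there by approximation from below, and transferring back to $\K$ through an Archimedean/{\L}ojasiewicz argument; this is exactly the route that packages \cite[Theorem 6]{NieSchweighofer} with \cite[Corollary 1]{KORDA2017}. First I would normalize by scaling. Since Assumption~\ref{assump::0} holds, $t^\ast\coloneqq\sup\{t>0\colon[-t,t]^n\subseteq\K\}=1/\kappa$ is positive, so the substitution $\bx\mapsto\kappa^{-1}\bx$ carries the standard cube $[-1,1]^n$ into $\K$. I would use this to renormalize the data, recording that the coefficient rescaling multiplies the sup-norm of a degree-$d$ form by at most $\kappa^{d}$; this is precisely what produces the factor $\kappa^{\deg\phi}$ in the bound, while the combinatorial constant $3^{\deg\phi+1}$ comes from controlling a Chebyshev/monomial expansion after scaling.

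The core is an effective representation on the cube. Writing $\phi_{\min}\coloneqq\min_{\bx\in\K}\phi>0$ and $\phi_{\max}\coloneqq\max_{\bx\in\K}\phi$, I would split $\phi=\tfrac{1}{2}\phi_{\min}+(\phi-\tfrac{1}{2}\phi_{\min})$, where the remainder stays $\ge\tfrac{1}{2}\phi_{\min}$ on $\K$, and then approximate $\phi$ from below by a low-degree element of the quadratic module obtained by applying a smoothing operator whose kernel lies in $Q(\bg)$. Controlling the approximation error by a Jackson-type estimate, exactly as in Lemma~\ref{prop::jackson}, reduces everything to bounding the modulus of continuity of $\phi$: Markov's inequality bounds $\|\nabla\phi\|$ on the cube by $(\deg\phi)^2$ times the sup-norm, and the product structure over the $n$ coordinates (equivalently the monomial count) contributes the $n^{\deg\phi}$ factor. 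Forcing the error below $\tfrac{1}{2}\phi_{\min}$ then makes the degree grow like a power of $(\deg\phi)^2n^{\deg\phi}\,\phi_{\max}/\phi_{\min}$, and keeping the approximant positive guarantees its membership in the truncated module.

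Finally I would transfer from the cube back to $\K$ and assemble the estimate. Here I would invoke the Archimedean hypothesis together with an effective {\L}ojasiewicz inequality for $\K$ — whose exponent and constants depend only on the $g_j$'s and are absorbed into $C_\K$ — to convert the cube representation into membership in $Q_k(\bg)$. This transfer is responsible for the outer exponentiation: the {\L}ojasiewicz exponent enters as the power $C_\K$, and its interaction with the cube bound compounds the degree, producing the overall $\exp[(\cdots)^{C_\K}]$. Combining the three stages yields the stated inequality with $C_\K$ depending only on the description of $\K$.

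I expect the main obstacle to be this last stage, namely making the cube-to-$\K$ transfer quantitative with constants independent of $\phi$: one must extract a {\L}ojasiewicz inequality for $\K$ with an explicit exponent and then track how that exponent compounds the degree bound, which is the genuine source of the doubly-exponential growth in \cite{NieSchweighofer}. Isolating the pure $\kappa$-dependence in the scaling step, so that it surfaces only as the clean factor $\kappa^{\deg\phi}$ while everything else is routed into $C_\K$ via \cite[Corollary 1]{KORDA2017}, is the delicate bookkeeping that makes the final explicit form come out.
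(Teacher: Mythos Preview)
The paper does not prove Theorem~\ref{th::d} at all: it is quoted verbatim from \cite[Theorem~3]{KH2018}, with the remark that it ``is a consequence of the fundamental result \cite[Theorem~6]{NieSchweighofer} and \cite[Corollary~1]{KORDA2017}.'' There is therefore no in-paper proof to compare your proposal against; the authors simply invoke the result as a black box and use it in the convergence-rate analysis that follows. If your assignment is to supply what the paper supplies, the correct move is a one-line citation, not a proof.

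That said, your sketch is a reasonable high-level summary of the Nie--Schweighofer/Korda--Henrion machinery, and you correctly identify the three structural ingredients (scaling to the cube giving $\kappa^{\deg\phi}$, Markov/Jackson-type control on the cube giving $(\deg\phi)^2 n^{\deg\phi}$, and an effective {\L}ojasiewicz transfer giving the outer exponential with exponent $C_{\K}$). Two places where the sketch is too loose to stand as a proof: the phrase ``smoothing operator whose kernel lies in $Q(\bg)$'' is not how the argument actually runs --- Nie--Schweighofer construct explicit SOS perturbations rather than convolving --- and the appeal to Lemma~\ref{prop::jackson} is misplaced, since that lemma concerns uniform polynomial approximation of rational functions and plays no role in the degree bound for Putinar representations. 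If you genuinely need to reprove the theorem rather than cite it, you should follow the actual construction in \cite{NieSchweighofer} and the refinement in \cite{KORDA2017} rather than the heuristic outline here.
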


Let 
\[
	q^{\max}\coloneqq\max_{2\le i \le N, \bx\in\K}q_i(\bx),\ \
	q^{\min}\coloneqq\min_{2\le i\le N, \bx\in\K}q_i(\bx),\ \
	\rho^{\max}\coloneqq\max_{\bx\in\K}\sum_{i=1}^N\frac{p_i(\bx)}{q_i(\bx)}.
\]
Using Theorem \ref{th::d}, we can establish the following convergence rate of the hierarchy \eqref{eq::gmpdualsos}. 
\begin{theorem}
Let the Archimedean condition and Assumptions \ref{as::0}, 
\ref{assump::0} hold.
Then, there exist constants $C_1$ and $C_2$ depending on $p_i$'s,
$q_i$'s, $g_j$'s and $\K$, such that for any $\varepsilon>0$, we have
$\rho_k\ge \rho-\varepsilon$ whenever
\[
	\begin{aligned}
		k&\ge 
C_2
		\exp{\left[\left(3(3\omega n)^{D(N,\varepsilon)}D(N,\varepsilon)^2
		\frac{(4(\rho^{\max}-\rho)+3\varepsilon)(N-1)}{\varepsilon}\frac{q^{\max}}{q^{\min}}\right)^{C_2}\right]}\\
		&=O\left(\exp\left[\frac{1}{\varepsilon^{3C_2}}(3\omega
		n)^{\frac{4(N-1)C_1C_2}{\varepsilon}}\right]\right),
	\end{aligned}
\]
where $\omega=\max\{1, \kappa\}$ and 
\[
	D(N,\varepsilon)\coloneqq\max\left\{\deg(p_i),\ \left\lceil\frac{4C_1(N-1)}{\varepsilon}\right\rceil+\deg(q_i) :i\in[N]\right\}.
\]
\end{theorem}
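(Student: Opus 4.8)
The plan is to make the qualitative convergence argument (the proof following Theorem~\ref{th::dualgmpeq}) fully quantitative by combining the Jackson-type approximation bound of Lemma~\ref{prop::jackson} with the effective Putinar certificate of Theorem~\ref{th::d}, carefully tracking how the degree of the approximating polynomials trades off against the strict-positivity margins of the constraint polynomials. Set $C_1 := \max_{2\le i\le N} c_i$, with $c_i$ the constants from Lemma~\ref{prop::jackson}, and fix the degree $d := \lceil 4C_1(N-1)/\varepsilon\rceil$. Lemma~\ref{prop::jackson} then furnishes, for each $i\in[N]\setminus\{1\}$, a polynomial $\tilde h_i\in\RR[\bx]_d$ with $\sup_{\bx\in\K}|p_i/q_i-\tilde h_i|\le C_1/d\le \varepsilon/(4(N-1))$. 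I would shift each approximant downward, setting $h_i := \tilde h_i - \varepsilon/(2(N-1))$, so that the gap $p_i/q_i - h_i$ is squeezed into the interval $[\varepsilon/(4(N-1)),\,3\varepsilon/(4(N-1))]$ on $\K$; the lower bound is what guarantees strict positivity below, and the two-sided control is what lets me bound the ratios demanded by Theorem~\ref{th::d}.

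The next step is to verify strict positivity of all constraint polynomials and to bound their extrema on $\K$. For $i\in[N]\setminus\{1\}$, since $\phi_i := p_i - h_i q_i = q_i\,(p_i/q_i - h_i)$ and $q_i>0$, the gap bound yields $\min_{\K}\phi_i \ge q^{\min}\,\varepsilon/(4(N-1))>0$ and $\max_{\K}\phi_i \le q^{\max}\,3\varepsilon/(4(N-1))$. For the first constraint, write $\phi_1 := p_1 + (\sum_{i=2}^N h_i - (\rho-\varepsilon))\,q_1 = q_1\,(p_1/q_1 + \sum_{i=2}^N h_i - (\rho-\varepsilon))$; summing the gap bounds gives $\sum_{i=2}^N h_i \ge \sum_{i=2}^N p_i/q_i - 3\varepsilon/4$, whence the bracket lies in $[\varepsilon/4,\,\rho^{\max}-\rho+3\varepsilon/4]$, so $\phi_1>0$ on $\K$ with $\max_{\K}\phi_1 \le q^{\max}(\rho^{\max}-\rho+3\varepsilon/4)$. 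The identity $\inf_{\K}\sum_i p_i/q_i=\rho$ (Theorem~\ref{th::dualgmpeq}) is exactly what is used to pass from the summed gap bound to the bracket lower bound $\varepsilon/4$.

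With positivity in hand, I would apply Theorem~\ref{th::d} to each of $\phi_1,\phi_2,\ldots,\phi_N$. The degrees satisfy $\deg\phi_i \le \max\{\deg p_i,\,d+\deg q_i\}\le D(N,\varepsilon)$, and $3^{\deg\phi+1}\kappa^{\deg\phi}(\deg\phi)^2 n^{\deg\phi}\le 3(3\omega n)^{D(N,\varepsilon)}D(N,\varepsilon)^2$ using $\kappa\le\omega$. For the ratios $\max_\K\phi/\min_\K\phi$, the key simplification is to bound all of them uniformly by pairing the largest numerator (from $\phi_1$, of order $\rho^{\max}-\rho$) against the smallest denominator (from the $\phi_i$, of order $\varepsilon/(N-1)$): this gives the single quantity $R := \frac{(4(\rho^{\max}-\rho)+3\varepsilon)(N-1)}{\varepsilon}\frac{q^{\max}}{q^{\min}}$, which upper-bounds every individual ratio and is precisely the factor appearing in the statement; the extra $(N-1)$ is the price of this cross-pairing. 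Taking $C_2:=C_{\K}$ and $k$ equal to the maximum of the thresholds produced by Theorem~\ref{th::d} then yields $\phi_j\in Q_k(\bg)$ for all $j$ at the stated value of $k$. Since $d\le 2k-\max\{\deg q_1,\deg q_i\}$ for such (exponentially large) $k$, the triple $(\rho-\varepsilon,h_2,\ldots,h_N)$ is feasible for \eqref{eq::gmpdualsos}, giving $\rho_k\ge\rho-\varepsilon$.

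Finally, the asymptotic form follows by isolating the dominant factor: as $\varepsilon\to0$ one has $D(N,\varepsilon)\sim 4C_1(N-1)/\varepsilon$, so $(3\omega n)^{D(N,\varepsilon)C_2}=(3\omega n)^{4(N-1)C_1C_2/\varepsilon}$ dominates, while the remaining factors $D(N,\varepsilon)^2$ and $R$ are polynomial in $1/\varepsilon$ and, after the outer $(\cdot)^{C_2}$, contribute the $1/\varepsilon^{3C_2}$ prefactor inside the exponential; the constant $3^{C_2}$ and the leading $C_2$ are absorbed into the $O(\cdot)$. The main obstacle I anticipate is not any single estimate but the uniform bookkeeping: ensuring that both families of constraints collapse into the \emph{single} degree $D(N,\varepsilon)$ and the \emph{single} ratio $R$—in particular, absorbing the first denominator $q_1$ and the $(N-1)$-fold accumulation of approximation errors into $q^{\max}/q^{\min}$ and the stated ratio—so that one threshold from Theorem~\ref{th::d} serves all constraints simultaneously.
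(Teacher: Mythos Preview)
Your proposal is correct and follows essentially the same approach as the paper's proof: fix the approximation degree via Lemma~\ref{prop::jackson} with $C_1=\max_i c_i$, shift the approximants down by $\varepsilon/(2(N-1))$ to create a strict-positivity margin, bound the min and max of each constraint polynomial on $\K$ in terms of $q^{\min},q^{\max}$, and then invoke Theorem~\ref{th::d} with the single uniform degree $D(N,\varepsilon)$ and the single cross-paired ratio $R$. The only cosmetic difference is that the paper chooses the degree of each $h_i$ as $\lceil 4c_i(N-1)/\varepsilon\rceil$ rather than the uniform $\lceil 4C_1(N-1)/\varepsilon\rceil$, but both are dominated by $D(N,\varepsilon)$ so the downstream estimates are identical.
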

\begin{proof}
	Fix an arbitrary $\varepsilon>0$. For each $i\in[N]\setminus\{1\}$, let
	$c_i$ be the constant in Lemma \ref{prop::jackson} and
	$C_1\coloneqq\max_{2\le i\le N}c_i$. Then,
	there exists $h_i\in\RR[\bx]$ of degree $\lceil
	4c_i(N-1)/\varepsilon\rceil$ satisfying 
\[
	\sup_{\bx\in\K} \left|\frac{p_i(\bx)}{q_i(\bx)}-h_i(\bx)\right|
	\le \frac{c_i}{\lceil 4c_i(N-1)/\varepsilon\rceil}
	\le \frac{\varepsilon}{4(N-1)}.
\]
For $i\in[N]\setminus\{1\}$, let $\hat{h}_i\coloneqq h_i-\varepsilon/2(N-1)$. 
Then, for any $\bx\in\K$, it holds that
\[
	\begin{aligned}
		p_i(\bx)-\hat{h}_i(\bx)q_i(\bx)=q_i(\bx)\left(\frac{p_i(\bx)}{q_i(\bx)}-\hat{h}_i(\bx)\right)
		=q_i(\bx)\left(\frac{p_i(\bx)}{q_i(\bx)}-h_i(\bx)+\frac{\varepsilon}{2(N-1)}\right),
	\end{aligned}
\]
and 
\[
		p_1(\bx)+\left(\sum_{i=2}^N\hat{h}_i(\bx)-(\rho-\varepsilon)\right)q_1(\bx)
		=q_1(\bx)\left(\frac{p_1(\bx)}{q_1(\bx)}+\sum_{i=2}^Nh_i(\bx)-\rho+\frac{\varepsilon}{2}\right).
\]
Therefore, for any $\bx\in\K$,  
\[
	\begin{aligned}
	p_i(\bx)-\hat{h}_i(\bx)q_i(\bx)&\ge \frac{\varepsilon}{4(N-1)}q^{\min},\\
		p_1(\bx)+\left(\sum_{i=2}^N\hat{h}_i(\bx)-(\rho-\varepsilon)\right)q_1(\bx)
		&\ge
		q_1(\bx)\left(\sum_{i=1}^N\frac{p_i(\bx)}{q_i(\bx)}-\frac{\varepsilon}{4}-\rho+\frac{\varepsilon}{2}\right)\ge \frac{\varepsilon}{4}q^{\min},
	\end{aligned}
\]
and 
\[
		\begin{aligned}
	p_i(\bx)-\hat{h}_i(\bx)q_i(\bx) & \le \frac{3\varepsilon}{4(N-1)} q^{\max},\\
		p_1(\bx)+\left(\sum_{i=2}^N\hat{h}_i(\bx)-(\rho-\varepsilon)\right)q_1(\bx)
		&\le
		q_1(\bx)\left(\sum_{i=1}^N\frac{p_i(\bx)}{q_i(\bx)}+\frac{\varepsilon}{4}-\rho+\frac{\varepsilon}{2}\right)\\
		&\le
		\left(\rho^{\max}-\rho+\frac{3\varepsilon}{4}\right) q^{\max}.
	\end{aligned}
\]
Note that 
\[
\begin{aligned}
&\frac{\varepsilon}{4(N-1)}q^{\min}\le \frac{\varepsilon}{4}q^{\min},\quad \frac{3\varepsilon}{4(N-1)} q^{\max}\le
\left(\rho^{\max}-\rho+\frac{3\varepsilon}{4}\right) q^{\max},\\
	&\deg(p_i-\hat{h}_i(\bx)q_i), \
	\deg\left(p_1+\left(\sum_{i=2}^N\hat{h}_i-(\rho-\varepsilon)\right)q_1\right)
	\le  D(N,\varepsilon).
\end{aligned}
\]
Hence, by Theorem \ref{th::d}, there exists a constant $C_2$ depending
on $g_j$'s such that whenever
\[
	\begin{aligned}
		k&\ge C_2
		\exp{\left[\left(3^{D(N,\varepsilon)+1}\omega^{D(N,\varepsilon)}D(N,\varepsilon)^2n^{D(N,\varepsilon)}
			\frac{\left(\rho^{\max}-\rho+\frac{3\varepsilon}{4}\right) q^{\max}}
		{\frac{\varepsilon}{4(N-1)}q^{\min}}\right)^{C_2}\right]}\\
&= 
C_2
		\exp{\left[\left(3(3\omega n)^{D(N,\varepsilon)}D(N,\varepsilon)^2
		\frac{(4(\rho^{\max}-\rho)+3\varepsilon)(N-1)}{\varepsilon}\frac{q^{\max}}{q^{\min}}\right)^{C_2}\right]}\\
		&=O\left(\exp\left[\frac{1}{\varepsilon^{3C_2}}(3\omega
		n)^{\frac{4(N-1)C_1C_2}{\varepsilon}}\right]\right),
	\end{aligned}
\]
$(\rho-\varepsilon, \hat{h}_2,\ldots,\hat{h}_N)$ is feasible to
\eqref{eq::gmpdualsos} and thus $\rho_k\ge \rho-\varepsilon$. 
\end{proof}

\section{Sign symmetry adapted SDP relaxations}\label{sec::sparsity}
In this section, we propose block-diagonal SDP relaxations for \eqref{eq::pro} by exploiting sign symmetries as well as correlative sparsity, which would significantly reduce the computational burden for structured problems.

\subsection{Sparse SDP relaxations by exploiting sign symmetries}
Let us first define the sign symmetries of a subset $\mathcal{A}\subseteq\N^n$.
\begin{definition}
Given a finite set $\mathcal{A}\subseteq\N^n$, the sign symmetries $\bR(\mathcal{A})$ of
$\mathcal{A}$ consist of all vectors $\br\in\Z^n_2\coloneqq\{0,1\}^n$ 
satisfying $\br^{\intercal}\ba\equiv0\,(\rm{mod}\,2)$ for all $\ba\in\mathcal{A}$. 
Moreover, we define the associated set $\overline{\bA}\coloneqq\{\ba\in\N^n\mid\br^{\intercal}\ba\equiv0\,(\rm{mod}\,2),\,\forall\br\in\bR(\bA)\}$.
\end{definition}

Note that $\bR(\mathcal{A})$ is a linear subspace of $\Z^n_2$ for any $\mathcal{A}\subseteq\N^n$.
The notion of sign symmetries stems from the invariance of polynomials under sign flips on variables \cite{Lofberg2009,WML2021}. For any $\br\in\Z^n_2$, we define the map $\theta_{\br}\colon\RR[\bx]\rightarrow\RR[\bx]$ by $\theta_{\br}(f)(x_1,\ldots,x_n)=f((-1)^{r_1}x_1,\ldots,(-1)^{r_n}x_n)$. A polynomial $f$ is said to have the sign symmetry $\br$ if $\theta_{\br}(f)=f$.
Note that a polynomial $f$ has the set of sign symmetries $\bR\subseteq\Z^n_2$ if and only if $\supp{f}\subseteq\{\ba\in\N^n\mid\br^{\intercal}\ba\equiv0\,(\rm{mod}\,2),\,\forall\br\in\bR\}$.


For $i\in[N]\setminus\{1\}$, let
\begin{equation}\label{eq::Ai}
	\bA_i\coloneqq\supp{p_i}\cup\supp{q_i}\cup\bigcup_{j=1}^m\supp{g_j},
\end{equation}
and let
\begin{equation}\label{eq::A1}
\bA_1\coloneqq\supp{p_1}\cup\supp{q_1}\cup\bigcup_{i=2}^{N}\mathcal{A}_i.
\end{equation}
For the remainder of this section, we denote the set of sign symmetries of $\bA_i$ by $\bR_i$. 

Given $\br\in\Z_2^n$, let $\br(\ca)\coloneqq((-1)^{r_1}a_1,\ldots,(-1)^{r_n}a_n)$ for $\ca\in\RR^n$ and for a set $S\subseteq\RR^n$, let $\br(S)\coloneqq\{\br(\ca)\in\RR^n\mid\ca\in S\}$. 
\begin{lemma}\label{sec4:lm2}
One has $\br(\K)=\K$, $\theta_{\br}(p_i)=p_i$, and $\theta_{\br}(q_i)=q_i$ 
for all $\br\in\mathcal{R}_i$, $i\in[N]$.
\end{lemma}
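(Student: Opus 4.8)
The plan is to prove each of the three claimed identities directly from the definitions, with the sign-symmetry claim about $\K$ following from the corresponding claim about the defining polynomials $g_j$. First I would unwind what it means for $\br\in\mathcal{R}_i$, i.e. $\br\in\bR(\bA_i)$: by definition this says $\br^\intercal\ba\equiv0\,(\mathrm{mod}\,2)$ for every $\ba\in\bA_i$. Since $\bA_i\supseteq\supp{p_i}\cup\supp{q_i}$ by \eqref{eq::Ai} (and $\bA_1$ contains $\supp{p_1}\cup\supp{q_1}$ together with all the $\bA_i$ by \eqref{eq::A1}), I would invoke the characterization recalled just before the lemma: a polynomial $f$ has sign symmetry $\br$ iff $\supp{f}\subseteq\{\ba\mid\br^\intercal\ba\equiv0\,(\mathrm{mod}\,2)\}$. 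Thus $\supp{p_i}\subseteq\bA_i$ together with $\br\in\bR(\bA_i)$ immediately gives $\theta_{\br}(p_i)=p_i$, and likewise $\theta_{\br}(q_i)=q_i$. This handles the second and third assertions for every $i\in[N]$ in one stroke.

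For the claim $\br(\K)=\K$, the key observation is that $\bA_i$ also contains $\bigcup_{j=1}^m\supp{g_j}$, so the same argument yields $\theta_{\br}(g_j)=g_j$ for every $j\in[m]$ whenever $\br\in\mathcal{R}_i$. I would then show that the sign-flip map $\bx\mapsto\br(\bx)=((-1)^{r_1}x_1,\ldots,(-1)^{r_n}x_n)$ preserves $\K$. The clean way is to note $\theta_{\br}(g_j)=g_j$ means $g_j(\br(\bx))=g_j(\bx)$ for all $\bx$, so $g_j(\bx)\ge0$ holds iff $g_j(\br(\bx))\ge0$; since $\br$ is its own inverse on $\RR^n$ (as $((-1)^{r_i})^2=1$), this gives $\bx\in\K\iff\br(\bx)\in\K$, i.e. $\br(\K)\subseteq\K$ and $\K\subseteq\br(\K)$, hence $\br(\K)=\K$.

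The only mild subtlety, and the step I would be most careful about, is the exact relationship between the algebraic sign-flip action $\theta_{\br}$ on polynomials and the geometric action $\br(\cdot)$ on points: one must verify the identity $\theta_{\br}(f)(\bx)=f(\br(\bx))$ directly from the definition $\theta_{\br}(f)(x_1,\ldots,x_n)=f((-1)^{r_1}x_1,\ldots,(-1)^{r_n}x_n)$, which is essentially a matter of matching notation but is what links the two halves of the argument. There is no deep obstacle here; the content is entirely bookkeeping, driven by the inclusions \eqref{eq::Ai}--\eqref{eq::A1} that were arranged precisely so that each relevant polynomial's support sits inside the set whose sign symmetries are being taken. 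I would present the $p_i,q_i$ identities first as an immediate consequence of the support characterization, then derive $\theta_{\br}(g_j)=g_j$ the same way, and finally translate the invariance of all the $g_j$ into the set identity $\br(\K)=\K$.
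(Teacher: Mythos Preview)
Your proposal is correct and is precisely the routine verification the paper has in mind; the paper's own proof consists of the single sentence ``It is straightforward to verify from the definitions.'' Your unpacking of the support inclusions \eqref{eq::Ai}--\eqref{eq::A1}, the characterization of sign symmetry via supports, and the involution argument for $\br(\K)=\K$ is exactly the intended content.
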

\begin{proof}
It is straightforward to verify from the definitions.
\end{proof}

In order to take the inherent sign symmetries of \eqref{eq::pro} into account, let us consider the following sign symmetry adapted version of \eqref{eq::gmpdual}:
\begin{equation*}\label{eq::gmpdualss}
\rho^{\rm{s}}\coloneqq\left\{
	\begin{aligned}
		\sup_{c,h_i}&\ c\\
		\text{s.t.}&\ \frac{p_1(\bx)}{q_1(\bx)}+\sum_{i=2}^Nh_i(\bx)\ge c,\quad\forall \bx\in\K,\\
		&\ \frac{p_i(\bx)}{q_i(\bx)}\ge h_i(\bx),\quad\forall \bx\in\K,\\
  &\ c\in\RR,\ h_i\in\RR[\overline{\bA_i}],\quad i\in[N]\setminus\{1\}.
	\end{aligned}\right.\tag{SD}
\end{equation*}

\begin{theorem}\label{cor::sdual}
Under Assumption \ref{as::0}, it holds $\rho^{\rm{s}}=\rho$. 
\end{theorem}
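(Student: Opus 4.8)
The plan is to establish $\rho^{\rm{s}}=\rho$ by proving two inequalities, following the same structure as the proof of Theorem~\ref{th::dualgmpeq}. The key new ingredient is a symmetrization argument that shows restricting each $h_i$ to the symmetry-adapted space $\RR[\overline{\bA_i}]$ costs nothing. First I would observe that $\rho^{\rm{s}}\le\rho$ is immediate: for any feasible $(c,h_2,\ldots,h_N)$ of \eqref{eq::gmpdualss}, summing the second family of constraints $p_i/q_i\ge h_i$ over $i\in[N]\setminus\{1\}$ and adding $p_1/q_1$ gives $\sum_{i=1}^N p_i/q_i\ge p_1/q_1+\sum_{i=2}^N h_i\ge c$ on $\K$, so $c\le\rho$ and hence $\rho^{\rm{s}}\le\rho$.

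For the reverse inequality I would start from the feasible solution produced in the proof of Theorem~\ref{th::dualgmpeq}: for arbitrary $\varepsilon>0$ there exist $h_i\in\RR[\bx]$ with $p_i/q_i-\varepsilon/(N-1)\le h_i\le p_i/q_i$ on $\K$, giving a value at least $\rho-\varepsilon$. These $h_i$ need not lie in $\RR[\overline{\bA_i}]$, so the crucial step is to symmetrize them. For each $i\in[N]\setminus\{1\}$, define the averaged polynomial
\[
\tilde{h}_i\coloneqq\frac{1}{|\bR_i|}\sum_{\br\in\bR_i}\theta_{\br}(h_i).
\]
By construction $\tilde{h}_i$ is invariant under every $\theta_{\br}$, $\br\in\bR_i$, so its support lies in $\{\ba\mid\br^{\intercal}\ba\equiv0\,(\md\,2),\,\forall\br\in\bR_i\}=\overline{\bA_i}$, i.e.\ $\tilde{h}_i\in\RR[\overline{\bA_i}]$, exactly as required.

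The heart of the argument is to check that $\tilde{h}_i$ still satisfies the constraints of \eqref{eq::gmpdualss} with essentially the same objective value. Here I would invoke Lemma~\ref{sec4:lm2}: since $\theta_{\br}(p_i)=p_i$, $\theta_{\br}(q_i)=q_i$, and $\br(\K)=\K$ for $\br\in\bR_i$, applying $\theta_{\br}$ to the inequality $p_i/q_i\ge h_i$ on $\K$ and using the change of variables $\bx\mapsto\br(\bx)$ shows that $p_i/q_i\ge\theta_{\br}(h_i)$ holds on $\K$ for every $\br\in\bR_i$. Averaging these $|\bR_i|$ inequalities yields $p_i/q_i\ge\tilde{h}_i$ on $\K$, so the second constraint family is preserved. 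For the first constraint, the same symmetrization bound $p_i/q_i-\varepsilon/(N-1)\le\theta_{\br}(h_i)$ (obtained from the analogous lower estimate) averages to $p_i/q_i-\varepsilon/(N-1)\le\tilde{h}_i\le p_i/q_i$, whence $\inf_{\bx\in\K}\bigl(p_1/q_1+\sum_{i=2}^N\tilde{h}_i\bigr)\ge\rho-\varepsilon$, so $(\rho-\varepsilon,\tilde{h}_2,\ldots,\tilde{h}_N)$ is feasible for \eqref{eq::gmpdualss}. Letting $\varepsilon\to0$ gives $\rho^{\rm{s}}\ge\rho$ and completes the proof.

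I expect the main obstacle to be a careful bookkeeping rather than a deep difficulty: one must verify that the sign symmetries of $p_i$ and $q_i$ (which are encoded jointly through $\bA_i$ and $\bR_i$, and for $i=1$ through the larger set $\bA_1$ absorbing all the $\bA_i$) genuinely leave both the numerator/denominator pair and the feasible domain $\K$ invariant, so that the change of variables argument is legitimate termwise. The subtle point is that each $h_i$ is symmetrized only with respect to $\bR_i$, and one must confirm that $\theta_{\br}$ acts as a measure-preserving involution on $\K$ so that $\inf_{\bx\in\K}$ is unchanged under $\bx\mapsto\br(\bx)$; this is precisely what Lemma~\ref{sec4:lm2} supplies. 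Once the invariances are in hand, the averaging is a standard Reynolds-operator argument and the rest is routine.
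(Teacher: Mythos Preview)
Your proposal is correct and follows essentially the same approach as the paper: both start from the $(\rho-\varepsilon,h_2,\ldots,h_N)$ constructed in Theorem~\ref{th::dualgmpeq}, symmetrize via the Reynolds operator $\tilde{h}_i=\frac{1}{|\bR_i|}\sum_{\br\in\bR_i}\theta_{\br}(h_i)$, and then use Lemma~\ref{sec4:lm2} to verify that the sandwich bounds \eqref{sec4:eq1} persist for $\tilde{h}_i$. The only cosmetic difference is that you justify $\tilde{h}_i\in\RR[\overline{\bA_i}]$ via invariance of $\tilde{h}_i$ under each $\theta_{\br}$ (using that $\bR_i$ is a subgroup), whereas the paper verifies directly that $\sum_{\br\in\bR_i}\theta_{\br}(\bx^{\ba})=0$ for $\ba\notin\overline{\bA_i}$; these are equivalent formulations of the same fact.
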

\begin{proof}
Clearly, we have $\rho^{\rm{s}}\le \rho$ by Theorem \ref{th::dualgmpeq}. To show the converse, fix an arbitrary $\varepsilon>0$ and let 
$(\rho-\varepsilon, h_2, \ldots, h_N)$ be the feasible solution to
\eqref{eq::gmpdual}  provided in the proof of Theorem \ref{th::dualgmpeq} so that
\begin{equation}\label{sec4:eq1}
\frac{p_i(\bx)}{q_i(\bx)}-\frac{\varepsilon}{N-1}\le h_i(\bx)\le \frac{p_i(\bx)}{q_i(\bx)},\quad\forall \bx\in\K.
\end{equation}
For each $i\in[N]\setminus\{1\}$, let $\tilde{h}_i\coloneqq\frac{1}{|\bR_i|}\sum_{\br\in\bR_i}\theta_{\br}(h_i)$. 
We have $\sum_{\br\in\bR_i}\theta_{\br}(\bx^{\ba})=0$ for each $\ba\in\N^n\setminus\overline{\bA_i}$.
In fact, as $\ba\in\N^n\setminus\overline{\bA_i}$, there exists $\tilde{\br}\in\mathcal{R}_i$ such that 
\[
-\sum_{\br\in\bR_i}\theta_{\br}(\bx^{\ba})=\tilde{\br}\left(\sum_{\br\in\bR_i}\theta_{\br}(\bx^{\ba})\right)
=\sum_{\br\in\bR_i}\theta_{\tilde{\br}+\br}(\bx^{\ba})=\sum_{\br\in\tilde{\br}+\bR_i}\theta_{\br}(\bx^{\ba})
=\sum_{\br\in\bR_i}\theta_{\br}(\bx^{\ba}).
\]
Therefore, we have $\tilde{h}_i\in\RR[\overline{\bA_i}]$. Moreover, it follows from Lemma \ref{sec4:lm2} and
\eqref{sec4:eq1} that
\begin{equation}\label{sec4:eq2}
\frac{p_i(\bx)}{q_i(\bx)}-\frac{\varepsilon}{N-1}\le \tilde{h}_i(\bx)\le \frac{p_i(\bx)}{q_i(\bx)},\quad\forall \bx\in\K.
\end{equation}
Thus we have
\[\inf_{\bx\in\K}\frac{p_1(\bx)}{q_1(\bx)}+\sum_{i=2}^N\tilde{h}_i(\bx)\ge
\inf_{\bx\in\K}\sum_{i=1}^N\frac{p_i(\bx)}{q_i(\bx)}-\varepsilon=\rho-\varepsilon.\]
Therefore, $(\rho-\varepsilon,\tilde{h}_2,\ldots,\tilde{h}_N)$ is feasible to
\eqref{eq::gmpdualss}. As $\varepsilon>0$ is arbitrary, we obtain
$\rho^{\rm{s}}\ge \rho$.
\end{proof}

Given $\br\in\Z_2^n$ with $\br(\K)=\K$, for a measure $\mu\in\mathcal{M}(\K)_+$,  
we define a new measure $\mu^{\br}$ by $\mu^{\br}(S)=\mu(\br(S))$ for any Borel set $S\subseteq\K$. A measure is said to be \emph{invariant} with respect to the sign symmetries $\bR$ if $\mu^{\br}=\mu$ for all $\br\in\bR$.

\begin{lemma}\label{sec4:lm1}
Let $\br\in\Z_2^n$, $\mu\in\mathcal{M}(\K)_+$, and $f\in\RR[\bx]$.
\begin{enumerate}
    \item For $\ba\in\N^n$, $\int_{\K}\bx^{\ba}\ud\mu^{\br}=(-1)^{\br^{\intercal}\ba}\int_{\K}\bx^{\ba}\ud\mu$.
    \item If $\br\in\bR(\supp{f})$, then $\int_{\K}f\ud\mu^{\br}=\int_{\K}f\ud\mu$.
    \item If $\br\in\bR(\supp{f})$, then for $\ba\in\N^n$, $\int_{\K}\bx^{\ba}f\ud\mu^{\br}=(-1)^{\br^{\intercal}\ba}\int_{\K}\bx^{\ba}f\ud\mu$.
\end{enumerate}
\end{lemma}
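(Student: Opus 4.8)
The plan is to recognize $\mu^{\br}$ as the pushforward of $\mu$ under the linear involution $\bx\mapsto\br(\bx)$ and then reduce all three parts to a single change-of-variables identity. First I would observe that $\br(\br(\bx))=\bx$, so $\br$ is its own inverse; the defining relation $\mu^{\br}(S)=\mu(\br(S))=\mu(\br^{-1}(S))$ therefore exhibits $\mu^{\br}$ as the pushforward $\br_{*}\mu$, and the hypothesis $\br(\K)=\K$ guarantees that $\mu^{\br}$ is again a finite positive measure supported on $\K$. Since $\K$ is compact and the integrands below are polynomials, every integral in sight is finite. The change-of-variables formula for pushforward measures then yields, for every bounded Borel function $g$,
\[
\int_{\K} g\,\ud\mu^{\br}=\int_{\K}(g\circ\br)\,\ud\mu=\int_{\K}\theta_{\br}(g)\,\ud\mu,
\]
where the last equality is merely the definition of $\theta_{\br}$, namely $(g\circ\br)(\bx)=g(\br(\bx))=\theta_{\br}(g)(\bx)$.

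For the first part I would apply this master identity to the monomial $g=\bx^{\ba}$ and compute
\[
\theta_{\br}(\bx^{\ba})=\prod_{i=1}^n\bigl((-1)^{r_i}x_i\bigr)^{\alpha_i}=(-1)^{\br^{\intercal}\ba}\bx^{\ba},
\]
which immediately gives $\int_{\K}\bx^{\ba}\,\ud\mu^{\br}=(-1)^{\br^{\intercal}\ba}\int_{\K}\bx^{\ba}\,\ud\mu$.

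The second and third parts then follow by linearity of the integral together with the defining congruence of the sign symmetries. Writing $f=\sum_{\bb\in\supp{f}}f_{\bb}\bx^{\bb}$ and applying the first part term by term, the third part reads
\[
\int_{\K}\bx^{\ba}f\,\ud\mu^{\br}=\sum_{\bb}f_{\bb}(-1)^{\br^{\intercal}(\ba+\bb)}\int_{\K}\bx^{\ba+\bb}\,\ud\mu,
\]
and the key observation is that $\br\in\bR(\supp{f})$ means $\br^{\intercal}\bb\equiv0\,(\mathrm{mod}\,2)$ for every $\bb\in\supp{f}$, so $(-1)^{\br^{\intercal}\bb}=1$; the factor $(-1)^{\br^{\intercal}(\ba+\bb)}$ thus collapses to $(-1)^{\br^{\intercal}\ba}$ and can be pulled out of the sum, giving $(-1)^{\br^{\intercal}\ba}\int_{\K}\bx^{\ba}f\,\ud\mu$. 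The second part is the special case $\ba=0$. Equivalently, one notes directly that $\br\in\bR(\supp{f})$ forces $\theta_{\br}(f)=f$, so the second and third parts drop out of the master identity applied to $g=f$ and $g=\bx^{\ba}f$ respectively.

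There is no genuine obstacle in this lemma: the only point requiring care is setting up the change-of-variables formula in the correct direction, which is where the involution property $\br=\br^{-1}$ and the invariance $\br(\K)=\K$ enter. Once that identity is in place, everything else is a routine monomial computation combined with the congruence defining $\bR(\supp{f})$.
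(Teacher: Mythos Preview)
Your proof is correct and is precisely the straightforward verification the paper has in mind; the paper's own proof consists of the single sentence ``It is straightforward to verify from the definitions.'' Your pushforward/change-of-variables framing and the monomial computation are exactly what that sentence is abbreviating.
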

\begin{proof}
It is straightforward to verify from the definitions.
\end{proof}

Let $\mathcal{M}(\K)^{\bR_i}_+$ denote the set of finite positive Borel measures that are invariant with respect to the sign symmetries $\bR_i$. Then the dual of \eqref{eq::gmpdualss} reads as
\begin{equation*}\label{eq::gmpss}
	\left\{
	\begin{aligned}
		\inf_{\mu_i\in\mathcal{M}(\K)^{\bR_i}_+}&\ \sum_{i=1}^N \int_{\K}
		p_i\ud \mu_i\\
		\text{s.t.}\quad\,\,&\ \int_{\K} q_1\ud\mu_1=1,\\
		&\ \int_{\K}
		\bx^{\ba}q_i\ud\mu_i=\int_{\K}\bx^{\ba}q_1\ud\mu_1,\quad\forall
		\ba\in\overline{\bA_i},i\in[N]\setminus\{1\}.\\
	\end{aligned}\right.\tag{SP}
\end{equation*}

We now point out that \eqref{eq::gmpss} cannot be viewed as a consequence of \eqref{eq::gmp} equipped with invariant measures. Indeed, the latter would involve additional constraints
\[\int_{\K}\bx^{\ba}q_1\ud\mu_1=0,\quad\forall\ba\in\overline{\bA_1}\setminus\bigcap_{i=2}^N\overline{\bA_i},\]
since for any $\ba\in\overline{\bA_1}\setminus\bigcap_{i=2}^N\overline{\bA_i}$, there exists $j\in[N]\setminus\{1\}$ such that $\ba\notin\overline{\bA_j}$ which implies $\int_{\K}\bx^{\ba}q_1\ud\mu_1=\int_{\K}\bx^{\ba}q_j\ud\mu_j=0$ by Lemma \ref{sec4:lm1}. This (somewhat surprising) fact particularly highlights the significance of considering the SOS problem \eqref{eq::gmpdual} when one exploits sign symmetries for \eqref{eq::pro}.


The sign symmetry adapted reformulation \eqref{eq::gmpdualss} allows us to consider block-diagonal SDP relaxations for \eqref{eq::pro}. For $\mathcal{A}\subseteq\N^n$ and $k\in\N$, let us define a binary matrix $B^{\mathcal{A}}_k$ indexed by $\N^n_k$ through
\[[B_k^{\bA}]_{\ba,\bb}=\left\{
		\begin{array}{ll}
			1, & \text{if}\ \ba+\bb\in\overline{\bA},\\
		0, & \text{otherwise}.
		\end{array}\right.\]
It could be easily seen that $B^{\mathcal{A}}_k$ is block-diagonal up to appropriate row/column permutations \cite{WML2021}. We define the sparse quadratic module $Q_k(\bg,\bA)$ associated with $\bA$ by 
\[Q_k(\bg,\bA)\coloneqq\left\{\sigma_0+\sum_{j=1}^m \sigma_j g_j \middle| 
\sigma_j=[\bx]_{k-d_j}^{\intercal}G_j[\bx]_{k-d_j},G_j\in\mathbb{S}^+(B^{\bA}_{k-d_j}),j\in\{0\}\cup[m]\right\},\]
where $d_0:=0$, for $s\in\N$, $[\bx]_s\coloneqq[1,x_1,x_2,\ldots,x_n^s]$ denotes the canonical vector of monomials up to degree $s$, and $\mathbb{S}^+(B^{\bA}_{s})$ denotes the set of positive semidefinite matrices with sparsity pattern being specified by $B^{\bA}_{s}$. 
The sign symmetry adapted version of \eqref{eq::gmpdualsos} is given by
\begin{equation*}\label{eq::gmpdualtssos}
\rho^{\rm{s}}_k\coloneqq\left\{
	\begin{aligned}
		\sup_{c,h_i}&\ c\\
		\text{s.t.}&\
		p_1(\bx)+\left(\sum_{i=2}^Nh_i(\bx)-c\right)q_1(\bx)\in
		Q_k(\bg,\bA_1),\\
		&\ p_i(\bx)-h_i(\bx)q_i(\bx)\in Q_k(\bg,\bA_i),\quad
		i\in[N]\setminus\{1\},\\
            &\ c\in\RR,\ h_i(\bx)\in\RR[\overline{\bA_i}]\cap\RR[\bx]_{2k-\max\{\deg(q_1),\deg(q_i)\}},\quad i\in[N]\setminus\{1\}.
	\end{aligned}
	\right.\tag{SD$k$}
\end{equation*}

The following theorem is a sign symmetry adapted version of Theorem \ref{th::PP}.
\begin{theorem}\cite[Theorem 6.11]{WML2021}\label{th::tsparse}
Let $f\in\RR[\bx]$ and $\mathcal{A}=\supp{f}\cup\bigcup_{j=1}^m\supp{g_j}$. Assume that the quadratic module $Q(\bg)$ is Archimedean and $f$ is positive on $\K$. Then $f\in
Q_k(\bg,\mathcal{A})$ for some $k\in\N$. 
\end{theorem}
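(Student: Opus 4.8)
The plan is to prove Theorem \ref{th::tsparse}, the sign symmetry adapted Positivstellensatz, by combining the ordinary Putinar Positivstellensatz (Theorem \ref{th::PP}) with a symmetrization argument that forces the SOS certificate to inherit the sign symmetries of $f$. The central point is that $\mathcal{A}=\supp{f}\cup\bigcup_{j=1}^m\supp{g_j}$ has a group of sign symmetries $\bR\coloneqq\bR(\mathcal{A})$ under which both $f$ and every $g_j$ are invariant (that is, $\theta_{\br}(f)=f$ and $\theta_{\br}(g_j)=g_j$ for all $\br\in\bR$, exactly as in Lemma \ref{sec4:lm2}), and under which $\K$ is invariant, i.e. $\br(\K)=\K$. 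The goal is to produce a representation $f=\sigma_0+\sum_{j=1}^m\sigma_jg_j$ in which each weight $\sigma_j$ is itself invariant under $\bR$, because an invariant SOS polynomial decomposes into squares supported on the sign-symmetry-adapted monomial blocks, which is precisely what membership in $Q_k(\bg,\mathcal{A})$ encodes through the block-diagonal sparsity matrices $B^{\bA}_{k-d_j}$.

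First I would invoke Theorem \ref{th::PP}: since $Q(\bg)$ is Archimedean and $f>0$ on $\K$, there exists $k\in\N$ and SOS multipliers $\sigma_0,\dots,\sigma_m$ with $\deg(\sigma_0),\deg(\sigma_jg_j)\le 2k$ such that $f=\sigma_0+\sum_{j=1}^m\sigma_jg_j$. This representation need not respect the symmetries, so the next step is to average it over the group $\bR$. Applying $\theta_{\br}$ to both sides and using $\theta_{\br}(f)=f$ and $\theta_{\br}(g_j)=g_j$ gives $f=\theta_{\br}(\sigma_0)+\sum_{j=1}^m\theta_{\br}(\sigma_j)g_j$ for every $\br\in\bR$; note $\theta_{\br}$ sends SOS polynomials to SOS polynomials since it is an algebra homomorphism that merely substitutes $x_i\mapsto\pm x_i$. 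Averaging yields
\[
f=\frac{1}{|\bR|}\sum_{\br\in\bR}\theta_{\br}(\sigma_0)
+\sum_{j=1}^m\left(\frac{1}{|\bR|}\sum_{\br\in\bR}\theta_{\br}(\sigma_j)\right)g_j,
\]
and each averaged multiplier $\widetilde{\sigma}_j\coloneqq\frac{1}{|\bR|}\sum_{\br\in\bR}\theta_{\br}(\sigma_j)$ is an SOS polynomial (a nonnegative combination of SOS polynomials, hence SOS) that is now invariant under every element of $\bR$ and whose degree has not increased.

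The main obstacle, and the heart of the argument, is to show that an invariant SOS polynomial $\widetilde{\sigma}_j$ admits a Gram representation $\widetilde{\sigma}_j=[\bx]_{k-d_j}^{\intercal}G_j[\bx]_{k-d_j}$ with $G_j\in\mathbb{S}^+(B^{\bA}_{k-d_j})$, i.e. the Gram matrix is positive semidefinite \emph{and} respects the prescribed sparsity pattern. The key structural fact is that a polynomial invariant under $\bR$ is supported on $\overline{\bA}=\{\ba\mid\br^{\intercal}\ba\equiv0\,(\mathrm{mod}\,2),\,\forall\br\in\bR\}$, so every monomial $\bx^{\ba+\bb}$ appearing in the Gram expansion with a nonzero coefficient satisfies $\ba+\bb\in\overline{\bA}$; the off-pattern entries (those indexed by $(\ba,\bb)$ with $\ba+\bb\notin\overline{\bA}$) contribute monomials that must cancel, and by grouping indices according to the cosets determined by $\bR$ one can symmetrize the Gram matrix itself so that these entries vanish without destroying positive semidefiniteness. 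This is exactly the block-diagonalization phenomenon recorded after the definition of $B^{\mathcal{A}}_k$, and it is where the work of \cite{WML2021} enters. I would carry out this reduction by partitioning the monomial index set $\N^n_{k-d_j}$ into classes under the equivalence $\ba\sim\bb\iff\ba+\bb\in\overline{\bA}$, observing that the Gram matrix of an invariant polynomial can be taken block-diagonal with respect to this partition (replacing $G_j$ by its average under the induced signed-permutation action annihilates cross-block entries while preserving $\widetilde\sigma_j$ and semidefiniteness). Concluding, each $\widetilde{\sigma}_j$ lies in the cone defining $Q_k(\bg,\mathcal{A})$, and therefore $f\in Q_k(\bg,\mathcal{A})$, as claimed.
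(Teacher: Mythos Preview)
Your proposal is correct and matches the standard argument. Note that the paper does not actually prove Theorem~\ref{th::tsparse}; it is quoted from \cite{WML2021}. However, Remark~\ref{rm} immediately following the statement indicates precisely the mechanism you describe: once $f\in Q_k(\bg)$ via Putinar, one symmetrizes the certificate over the sign-symmetry group $\bR(\mathcal{A})$ to land in $Q_k(\bg,\mathcal{A})$ at the \emph{same} order $k$.

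Two small points worth tightening. First, the two averaging steps you describe (averaging the multipliers $\sigma_j$, then averaging the Gram matrix) can be collapsed into one: writing $\sigma_j=[\bx]_{k-d_j}^{\intercal}G_j[\bx]_{k-d_j}$ and letting $D_{\br}$ be the diagonal matrix with entries $(-1)^{\br^{\intercal}\ba}$, the single average $G_j'\coloneqq\frac{1}{|\bR|}\sum_{\br\in\bR}D_{\br}G_jD_{\br}$ is positive semidefinite, has $(G_j')_{\ba,\bb}=G_{j,\ba,\bb}\cdot\frac{1}{|\bR|}\sum_{\br\in\bR}(-1)^{\br^{\intercal}(\ba+\bb)}$, and the character sum vanishes exactly when $\ba+\bb\notin\overline{\mathcal{A}}$; this gives $G_j'\in\mathbb{S}^+(B^{\mathcal{A}}_{k-d_j})$ directly. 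Second, you should state explicitly that the relation $\ba\sim\bb\iff\ba+\bb\in\overline{\mathcal{A}}$ is indeed an equivalence relation on $\N^n_{k-d_j}$ (transitivity follows since $\overline{\mathcal{A}}$ is closed under sums modulo~$2$), which is what makes the block structure of $B^{\mathcal{A}}_{k-d_j}$ well-defined. With these clarifications, the argument is complete and is essentially the one referenced by the paper.
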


\begin{remark}\label{rm}
By a similar argument as for Theorem \ref{th::tsparse} (see \cite{WML2021}), one can actually show that if $f\in Q_k(\bg)$, then $f\in Q_k(\bg,\mathcal{A})$ with $\mathcal{A}=\supp{f}\cup\bigcup_{j=1}^m\supp{g_j}$. 
\end{remark}

\begin{theorem}\label{th::convts}
The following statements hold true:
\begin{enumerate}
    \item[\upshape (i)] For $k\ge d_{\min}$, $\rho^{\rm{s}}_k\le \rho_k$. Moreover, if $\bR_1=\bR_2=\cdots=\bR_N$, then $\rho^{\rm{s}}_k=\rho_k$.
    \item[\upshape (ii)] Under Assumption \ref{as::0} and the Archimedean condition, $\rho^{\rm{s}}_k\nearrow \rho$ as $k\to\infty$.
\end{enumerate}
\end{theorem}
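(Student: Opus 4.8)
The plan is to treat the two parts separately: part (i) splits into an immediate feasibility containment and a symmetrization argument, while part (ii) follows the template of the earlier (unnumbered) convergence theorem for $\rho_k$, but with the sparse Positivstellensatz in place of Theorem~\ref{th::PP}.

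For the inequality $\rho^{\rm s}_k\le\rho_k$ in (i), I would simply observe that \eqref{eq::gmpdualtssos} is a restriction of \eqref{eq::gmpdualsos}. Indeed, the Gram matrices defining the sparse module $Q_k(\bg,\bA)$ are forced to carry the sparsity pattern $B^{\bA}_{k-d_j}$, so $Q_k(\bg,\bA)\subseteq Q_k(\bg)$; moreover $\RR[\overline{\bA_i}]\cap\RR[\bx]_{2k-\max\{\deg(q_1),\deg(q_i)\}}\subseteq\RR[\bx]_{2k-\max\{\deg(q_1),\deg(q_i)\}}$. Hence every feasible point of \eqref{eq::gmpdualtssos} is feasible for \eqref{eq::gmpdualsos} with the same objective $c$, and taking suprema gives $\rho^{\rm s}_k\le\rho_k$.

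For the equality under $\bR_1=\cdots=\bR_N$, the idea is to symmetrize a feasible solution of \eqref{eq::gmpdualsos}. Given $(c,h_2,\ldots,h_N)$ feasible for \eqref{eq::gmpdualsos}, set $\tilde h_i:=\frac{1}{|\bR_i|}\sum_{\br\in\bR_i}\theta_{\br}(h_i)$ as in the proof of Theorem~\ref{cor::sdual}, which already shows $\tilde h_i\in\RR[\overline{\bA_i}]$ and preserves the degree bound. The key observations are: (a) by Lemma~\ref{sec4:lm2} together with $\bigcup_j\supp{g_j}\subseteq\bA_i$, each $\theta_{\br}$ with $\br\in\bR_i$ fixes $p_i$, $q_i$, and every $g_j$, so $\theta_{\br}$ maps $Q_k(\bg)$ into itself; (b) since $Q_k(\bg)$ is a convex cone, averaging the memberships $\theta_{\br}(p_i-h_iq_i)=p_i-\theta_{\br}(h_i)q_i\in Q_k(\bg)$ over $\br\in\bR_i$ yields $p_i-\tilde h_iq_i\in Q_k(\bg)$, and because all $\bR_i$ coincide the same averaging turns the first constraint into $p_1+(\sum_{i=2}^N\tilde h_i-c)q_1\in Q_k(\bg)$. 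It then remains to upgrade these memberships from $Q_k(\bg)$ to the prescribed modules $Q_k(\bg,\bA_i)$ and $Q_k(\bg,\bA_1)$, which I address below; granting that, $(c,\tilde h_2,\ldots,\tilde h_N)$ is feasible for \eqref{eq::gmpdualtssos} with objective $c$, giving $\rho_k\le\rho^{\rm s}_k$ and hence equality.

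For part (ii), I would mirror the convergence proof for $\rho_k$. Monotonicity $\rho^{\rm s}_k\le\rho^{\rm s}_{k+1}$ is clear, and $\rho^{\rm s}_k\le\rho_k\le\rho$ by (i). Fixing $\varepsilon>0$ and writing $\bar\varepsilon=\varepsilon/3$, I would take the symmetrized approximants $\tilde h_i\in\RR[\overline{\bA_i}]$ from the proof of Theorem~\ref{cor::sdual} satisfying $\frac{p_i}{q_i}-\frac{\bar\varepsilon}{N-1}\le\tilde h_i\le\frac{p_i}{q_i}$ on $\K$, and set $h_i:=\tilde h_i-\frac{\bar\varepsilon}{N-1}$, which still lies in $\RR[\overline{\bA_i}]$ since constants belong to every $\RR[\overline{\bA_i}]$. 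Exactly as in the earlier computation one gets $p_i-h_iq_i\ge\frac{\bar\varepsilon}{N-1}q_i>0$ and $p_1+(\sum_{i=2}^Nh_i-(\rho-\varepsilon))q_1\ge\bar\varepsilon q_1>0$ on $\K$. I would then invoke the sparse Positivstellensatz, Theorem~\ref{th::tsparse}: each strictly positive polynomial lies in some sparse module, so choosing $k_\varepsilon=\max_i k_i$ large enough (to supply the certificates and to meet the degree bounds on the fixed-degree $h_i$) makes $(\rho-\varepsilon,h_2,\ldots,h_N)$ feasible for \eqref{eq::gmpdualtssos}, whence $\rho^{\rm s}_{k_\varepsilon}\ge\rho-\varepsilon$; monotonicity and boundedness then give $\rho^{\rm s}_k\nearrow\rho$.

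The main obstacle, common to both the equality in (i) and the convergence in (ii), is the passage from membership in the full module $Q_k(\bg)$ (or the unstructured sparse module produced by Theorem~\ref{th::tsparse}) to the prescribed module $Q_k(\bg,\bA_i)$. The crucial point is that $\overline{\bA_i}$ is closed under addition: since $p_i,q_i,g_j$ have supports in $\bA_i\subseteq\overline{\bA_i}$ and $\tilde h_i,h_i\in\RR[\overline{\bA_i}]$, the polynomials $p_i-\tilde h_iq_i$ and $p_i-h_iq_i$ have support in $\overline{\bA_i}$, while using $\bA_i\subseteq\bA_1$ (so $\bR_1\subseteq\bR_i$ and $\overline{\bA_i}\subseteq\overline{\bA_1}$) the first-constraint polynomial has support in $\overline{\bA_1}$. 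Setting $\mathcal{A}=\supp{\cdot}\cup\bigcup_j\supp{g_j}\subseteq\overline{\bA_i}$, Remark~\ref{rm} places the polynomial in $Q_k(\bg,\mathcal{A})$, and since $\overline{\mathcal{A}}\subseteq\overline{\bA_i}$ the pattern $B^{\mathcal{A}}_s$ has no more nonzeros than $B^{\bA_i}_s$, giving $Q_k(\bg,\mathcal{A})\subseteq Q_k(\bg,\bA_i)$. I expect the hypothesis $\bR_1=\cdots=\bR_N$ in (i) to be essential precisely here: only then do the individually symmetrized $\tilde h_i$ make the combined first constraint invariant under a single common group, so that its support lands in $\overline{\bA_1}$ and the averaging is consistent across all constraints.
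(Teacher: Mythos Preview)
Your proposal is correct and follows essentially the same approach as the paper: the inequality in (i) by feasibility containment, the equality in (i) by symmetrizing $(c,h_2,\dots,h_N)$ with the common group $\bR_1=\cdots=\bR_N$ and then invoking Remark~\ref{rm}, and (ii) by repeating the $\rho_k$-convergence argument with Theorem~\ref{th::tsparse} in place of Theorem~\ref{th::PP}. Your final paragraph spelling out why $Q_k(\bg,\mathcal{A})\subseteq Q_k(\bg,\bA_i)$ when $\mathcal{A}\subseteq\overline{\bA_i}$ (via $\bR_i\subseteq\bR(\mathcal{A})$, hence $\overline{\mathcal{A}}\subseteq\overline{\bA_i}$) makes explicit a step the paper leaves implicit, but the overall route is the same.
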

\begin{proof}
(i). It follows from the fact that any feasible solution $(c,h_2,\ldots,h_N)$ to \eqref{eq::gmpdualtssos} is also feasible to \eqref{eq::gmpdualsos}. Now assume $\bR_1=\bR_2=\cdots=\bR_N$. To show $\rho^{\rm{s}}_k\ge \rho_k$, let $(c, h_2, \ldots, h_N)$ be any feasible solution to \eqref{eq::gmpdualsos}.
For each $i\in[N]\setminus\{1\}$, let $\tilde{h}_i\coloneqq\frac{1}{|\bR_1|}\sum_{\br\in\bR_1}\theta_{\br}(h_i)\in\RR[\overline{\bA_1}]\cap\RR[\bx]_{2k-\max\{\deg(q_1),\deg(q_i)\}}$. From $p_i(\bx)-h_i(\bx)q_i(\bx)\in Q_k(\bg)$ and Remark \ref{rm}, we deduce that
\begin{equation}\label{sec4:eq3}
p_i(\bx)-\tilde{h}_i(\bx)q_i(\bx)\in Q_k(\bg,\bA_1).
\end{equation}
Moreover, from $p_1(\bx)+\left(\sum_{i=2}^Nh_i(\bx)-c\right)q_1(\bx)\in Q_k(\bg)$ and Remark \ref{rm} it follows that
\[p_1(\bx)+\left(\sum_{i=2}^N\tilde{h}_i(\bx)-c\right)q_1(\bx)\in Q_k(\bg,\bA_1).\]
Therefore, $(c,\tilde{h}_2,\ldots,\tilde{h}_N)$ is feasible to
\eqref{eq::gmpdualtssos}, which implies $\rho^{\rm{s}}_k\ge \rho_k$. 

(ii). It is clear that $\rho^{\rm{s}}_k\le \rho^{\rm{s}}_{k+1}$ for any $k\ge d_{\min}$.
To show the convergence, fix an arbitrary $\varepsilon>0$.
For $\bar{\varepsilon}\coloneqq\varepsilon/3$, let
$(\rho-\bar{\varepsilon}, \bar{h}_2,\ldots,\bar{h}_N)$ be the feasible solution to \eqref{eq::gmpdualss} provided in the proof of Theorem \ref{cor::sdual}. For each $i\in[N]\setminus\{1\}$,
let 
\[h_i\coloneqq \bar{h}_i-\frac{\bar{\varepsilon}}{N-1}\in\RR[\overline{\bA_i}].\]
Then, for $\bx\in\K$, it holds that
\begin{equation}\label{eq::positive1}
p_i(\bx)-h_i(\bx)q_i(\bx)=p_i(\bx)-\bar{h}_i(\bx)q_i(\bx)+\frac{\bar{\varepsilon}}{N-1}q_i(\bx)\ge\frac{\bar{\varepsilon}}{N-1}q_i(\bx)>0.
\end{equation}
By Theorem \ref{th::tsparse}, there exists $k_i\in\N$ such that $h_i\in\RR[\overline{\bA_i}]\cap\RR[\bx]_{2k_i-\deg(q_i)}$ and $p_i-h_iq_i\in Q_{k_i}(\bg,\bA_i)$. Moreover, for $\bx\in\K$, we have 
\[\begin{aligned}
&p_1(\bx)+\left(\sum_{i=2}^N h_i(\bx)-(\rho-\varepsilon)\right)q_1(\bx)\\
=\,&p_1(\bx)+q_1(\bx)\sum_{i=2}^N \bar{h}_i(\bx)-\bar{\varepsilon} q_1(\bx)-\rho q_1(\bx)+3\bar{\varepsilon} q_1(\bx)\\
\ge\, & (\rho-\bar{\varepsilon})q_1(\bx)-\rho q_1(\bx)+2\bar{\varepsilon} q_1(\bx)
\ge \bar{\varepsilon} q_1(\bx)>0. 
\end{aligned}\]
By Theorem \ref{th::tsparse} again, there exists $k_1\in\N$
such that
\[p_1(\bx)+\left(\sum_{i=2}^N h_i(\bx)-(\rho-\varepsilon)\right)q_1(\bx)
\in Q_{k_1}(\bg,\bA_1).\]	
Let $k_{\varepsilon}\coloneqq\max_{1\le i\le N} k_i$. Then 
$(\rho-\varepsilon, h_2,\ldots,h_N)$ is feasible to \eqref{eq::gmpdualtssos} with $k=k_{\varepsilon}$,
which implies $\rho^{\rm{s}}_{k_{\varepsilon}}\ge \rho-\varepsilon$. As $\varepsilon$ is arbitrary, we prove the convergence of $\{\rho^{\rm{s}}_k\}$ to $\rho$.
\end{proof}

Note that the optimum $\rho^{\rm{s}}_k$ of \eqref{eq::gmpdualtssos} may depend on which ratio being chosen as $p_1/q_1$. We give an example to illustrate this phenomenon.
\begin{example}\label{ex::0}
Let
\begin{equation*}
    f\coloneqq\frac{x^2+y^2-yz}{1+2x^2+y^2+z^2}+\frac{y^2+x^2z}{1+x^2+2y^2+z^2}+\frac{z^2-x+y}{1+x^2+y^2+2z^2}.
\end{equation*}
Consider the minimization of $f$ over the unit ball $\{(x,y,z)\in\RR^3\mid 1-x^2-y^2-z^2\ge0\}$. 
We consider three cases: (1) $p_1=x^2+y^2-yz,q_1=1+2x^2+y^2+z^2$; (2) $p_1=y^2+x^2z,q_1=1+x^2+2y^2+z^2$; 
(3) $p_1=z^2-x+y,q_1=1+x^2+y^2+2z^2$. We present the computational results in Table \ref{tab::0}.
For this problem, $-0.3465$ can be certified to be globally optimal.

\begin{table}[htbp]\label{tab::0}
\caption{Computational results for Example \ref{ex::0}.}
\centering
\begin{tabular}{cccccccc}
\midrule[0.8pt]
\multirow{2}{*}{$k$} &&\multirow{2}{*}{$\sup$\eqref{eq::gmpdualsos}} &&\multicolumn{3}{c}{$\sup$\eqref{eq::gmpdualtssos}} \\
 \cline{5-7}
&& & & case (1) & case (2) & case (3)\\
\midrule[0.4pt]
        $2$&&-0.3563 &&-0.4275 & -0.4513 & -0.4738 \\
        $3$&&-0.3465 &&-0.3469 & -0.3546 & -0.3550 \\
        $4$&&&&-0.3465 & -0.3465 & -0.3465\\
\midrule[0.8pt]
\end{tabular}
\end{table}

\end{example}

\begin{remark}
To guarantee $\rho^{\rm{s}}_k=\rho_k$, one may simply letting $\bA_1=\cdots=\bA_N$ by Theorem \ref{th::convts}.
\end{remark}

Finally, the dual of \eqref{eq::gmpdualtssos} reads as
\begin{equation*}\label{eq::momentsdpts}
\left\{
	\begin{aligned}
		\inf_{\by_i}&\ \sum_{i=1}^N L_{\by_{i}}(p_i)\\
		\text{s.t.}&\ B^{\bA_i}_{k}\circ \bM_k(\by_i)\succeq 0,\quad  i\in[N],\\
		&\ B^{\bA_i}_{k-d_j}\circ \bM_{k-d_j}(g_j\by_i)\succeq 0,\quad i\in[N],j\in[m],\\
		&\ L_{\by_1}(q_1)=1,\\
		&\ L_{\by_{i}}(\bx^{\ba}q_i)=L_{\by_1}(\bx^{\ba}q_1),\quad\forall
		\ba\in\N^n_{2k-\max\{\deg(q_1),\deg(q_i)\}}\cap\overline{\bA_i},i\in[N]\setminus\{1\}.\\
	\end{aligned}\right.\tag{SP$k$}
\end{equation*}

\subsection{Sparse SDP relaxations by exploiting both correlative sparsity and sign symmetries}
In this subsection, we present sparse SDP relaxations for \eqref{eq::pro} by exploiting both correlative sparsity and sign symmetries. Let us begin by recalling a correlative sparsity adapted GMP reformulation and a corresponding hierarchy of sparse SDP relaxations for \eqref{eq::pro} proposed in \cite{BHL2016}. We first describe the correlative sparsity pattern in \eqref{eq::pro}.
For $I\subseteq[n]$, $\bx(I)$ denotes the set of variables $x_i$ with $i\in I$.
\begin{assumption}\label{assump2}
The index sets $[n]$ and $[m]$ can be decomposed as $[n]=\cup_{i=1}^N I_i$ and $[m]=\cup_{i=1}^N J_i$ such that
\begin{enumerate}
       \item[(i)] For every $i\in[N]$, $p_i, q_i\in\RR[\bx(I_i)]$;
       \item[(ii)] For every $j\in J_i$, $g_j\in\RR[\bx(I_i)]$;
       \item[(iii)] For every $i\in[N]$, there exists $k\in J_i$ such that 
       $g_k=M_i-\sum_{\ell\in I_i}x_{\ell}^2$ for some $M_i>0$;
       \item[(iv)] The subsets $\{I_i\}_{i=1}^N$ satisfy the running intersection property (RIP), that is, 
       for every $i\in[N]\setminus\{1\}$, $I_i\cap (\cup_{j=1}^{i-1}I_j)\subseteq I_k$ for some $k\in[i-1]$. 
   \end{enumerate}
\end{assumption}

\begin{remark}
If $\K$ is compact and one knows some $M>0$ such that $M-x_1^2-\cdots-x_n^2\ge0$ for all $\bx\in\K$, then we can always add redundant constraints $M-\sum_{j\in I_i}x_j^2\ge0, i\in[N]$ to the description of $\K$ so that Assumption \ref{assump2} (iii) is satisfied.
\end{remark}

For every $i\in[N]$, let 
\[
\K_i\coloneqq\left\{\bx(I_i)\in\RR^{|I_i|} \mid g_j(\bx(I_i))\ge 0, \ j\in J_i\right\},
\]
and $\pi_i\colon \mathcal{M}(\K)_+\rightarrow\mathcal{M}(\K_i)_+$ be the projection on $\K_i$, that is, for any $\mu\in\mathcal{M}(\K)_+$, 
\begin{equation*}
    \pi_i(\mu(B))\coloneqq\mu(\{\bx:\bx\in \K,\bx(I_i)\in B\}), \quad\forall B\in\mathcal{B}(\K_i),
\end{equation*}
where $\mathcal{B}(\K_i)$ is the usual Borel $\sigma$-algebra associated with $\K_i$. 
For every pair $(i,j)\in[N]\times[N]$ with $i\ne j$ and $I_i\cap I_j\ne\emptyset$, let
\begin{equation*}
    \K_{ij}=\K_{ji}\coloneqq\left\{\bx(I_i\cap I_j):\bx(I_i)\in\K_i,\ \bx(I_j)\in\K_j\right\},
\end{equation*}
and the projection $\pi_{ij}\colon \mathcal{M}(\K_i)_+\rightarrow\mathcal{M}(\K_{ij})_+$ is defined in an obvious similar manner. 
Moreover, let us define the sets
\[U_i\coloneqq\{j\in\{i+1, \ldots, N\} \mid I_i\cap I_j\neq \emptyset\},\ i\in[N-1],\quad 
U_N\coloneqq\emptyset,\]
and 
\[V_1\coloneqq\emptyset,\quad V_i\coloneqq\{j\in\{1, \ldots, i-1\} \mid I_i\cap I_j\neq \emptyset\},\ i\in[N]\setminus\{1\}.\]
Then the correlative sparsity adapted GMP reformulation for \eqref{eq::pro} is given by
\begin{equation*}\label{eq::gmpcs}
\rho^{\rm{c}}\coloneqq\left\{
	\begin{aligned}
		\inf_{\mu_i\in\mathcal{M}(\K_i)_+}&\ \sum_{i=1}^N \int_{\K_i}
		p_i\ud \mu_i\\
		\text{s.t.}\quad\,\,&\ \int_{\K_i} q_i\ud\mu_i=1,\quad i\in[N],\\
		&\ \pi_{ij}(q_i\ud\mu_i)=\pi_{ji}(q_j\ud\mu_j),\quad  j\in U_i,\ i\in[N-1].
	\end{aligned}\right.\tag{CP}
\end{equation*}

For $I\subseteq [n]$ and $f\in\mathbb{R}[\bx(I)]$, let $\bM_k(\by, I)$ (resp. $\bM_k(f\by, I)$) be the moment (resp. localizing) submatrix obtained by retaining only those rows and columns of $\bM_k(\by)$ (resp. $\bM_k(f\by)$) indexed by $\ba\in \mathbb{N}^n$ with $\alpha_i=0$ whenever $i\notin I$.
For $1\le i<j\le N$ with $I_i\cap I_j\neq\emptyset$, 
let $\N^{(ij)}\coloneqq\{\ba\in\N^n \mid \alpha_k=0, \,\forall k\not\in I_i\cap I_j\}$.
Then, the hierarchy of correlative sparsity adapted SDP relaxations for \eqref{eq::pro} is given by
\begin{equation*}\label{eq::gmpcssdp}
\rho_k^{\rm{c}}\coloneqq\left\{
	\begin{aligned}
		\inf_{\by_i}&\ \sum_{i=1}^N L_{\by_{i}}(p_i)\\
		\text{s.t.}&\ \bM_k(\by_i, I_i)\succeq 0,\quad  i\in[N],\\
		&\ \bM_{k-d_j}(g_j\by_i, I_i)\succeq 0,\quad j\in J_i,i\in[N],\\
		&\ L_{\by_i}(q_i)=1,\quad i\in[N],\\
		&\ L_{\by_{i}}(\bx^{\ba}q_i)=L_{\by_j}(\bx^{\ba}q_j),\\
  &\ \forall
		\ba\in\N_{2k-\max\{\deg(q_i),\deg(q_j)\}}^{(ij)}, j\in U_i,i\in[N-1].\\
	\end{aligned}\right.\tag{CP$k$}
\end{equation*}

\begin{theorem}{\upshape \cite[Theorems 3.1 and 3.2]{BHL2016}}\label{th::eqcs}
Let Assumption \ref{assump2} hold and assume that $q_i>0$ on $\K_i$ for $i\in[N]$. 
It hold $\rho^{\rm{c}}=\rho$ and $\rho^{\rm{c}}_{k}\nearrow \rho$ as $k\to\infty$.
\end{theorem}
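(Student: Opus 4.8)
The plan is to establish the two claims of Theorem~\ref{th::eqcs} separately, namely the exactness $\rho^{\rm{c}}=\rho$ of the correlative sparsity adapted GMP reformulation \eqref{eq::gmpcs}, and the asymptotic convergence $\rho^{\rm{c}}_k\nearrow\rho$ of the sparse SDP hierarchy \eqref{eq::gmpcssdp}. Both are cited as \cite[Theorems 3.1 and 3.2]{BHL2016}, so the real content is to verify that the hypotheses of that reference are met under Assumption~\ref{assump2} together with the local positivity $q_i>0$ on $\K_i$, and then to trace through why the sparse reformulation does not lose tightness.

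For the exactness $\rho^{\rm{c}}=\rho$, the inequality $\rho^{\rm{c}}\le\rho$ is the easy direction: given any feasible collection $\{\mu_i\}$ for the dense GMP \eqref{eq::gmp}, one projects via $\pi_i$ onto $\K_i$, rescales by the denominators, and checks that the images satisfy the marginal compatibility constraints $\pi_{ij}(q_i\,\mathrm{d}\mu_i)=\pi_{ji}(q_j\,\mathrm{d}\mu_j)$, using that $p_i,q_i\in\RR[\bx(I_i)]$ so the objective and normalization only see the relevant variables. The reverse inequality $\rho\le\rho^{\rm{c}}$ is where the running intersection property of Assumption~\ref{assump2}(iv) is essential: first I would reduce \eqref{eq::pro} to the dual single-denominator picture of Section~\ref{sec::dual}, replacing each ratio $p_i/q_i$ by a polynomial lower approximation $h_i\in\RR[\bx(I_i)]$ (which respects the sparsity since $p_i,q_i$ live on $\bx(I_i)$), and then invoke the sparse Positivstellensatz of Lasserre (valid under RIP and the compactness guaranteed by Assumption~\ref{assump2}(iii)) to glue the locally supported measures into a globally consistent one. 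The compatibility of overlapping marginals on $\K_{ij}$ is exactly the mechanism that permits this gluing.

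For the convergence of the hierarchy, I would fix $\varepsilon>0$ and mimic the proof of Theorem~\ref{th::convts}(ii), but with the sparse ingredients: choose sparse polynomial approximations $h_i\in\RR[\bx(I_i)]$ from below so that each $p_i-h_iq_i$ and the aggregated first term $p_1+(\sum_{i\ge2}h_i-(\rho-\varepsilon))q_1$ is strictly positive on the relevant $\K_i$, and then apply a sparse version of Putinar's Positivstellensatz (the certificates land in the correlatively sparse quadratic modules associated to the $J_i$) to obtain membership at some finite relaxation order $k_\varepsilon$. Monotonicity $\rho^{\rm{c}}_k\le\rho^{\rm{c}}_{k+1}\le\rho$ is immediate from the nesting of feasible sets and weak duality against \eqref{eq::gmpcs}, so only the lower bound $\rho^{\rm{c}}_{k_\varepsilon}\ge\rho-\varepsilon$ requires work.

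The main obstacle I anticipate is the reverse exactness direction $\rho\le\rho^{\rm{c}}$, specifically verifying that the running intersection property is strong enough to reconstruct a single feasible measure on $\K$ from the family $\{\mu_i\}$ on the pieces $\K_i$ with matching $q_i$-weighted marginals; the subtlety is that the normalization constraints $\int_{\K_i}q_i\,\mathrm{d}\mu_i=1$ couple to the denominators rather than to the raw measures, so the gluing must be carried out after the denominator rescaling, and one must check that the resulting global object remains a genuine positive measure whose value against $\sum_i p_i/q_i$ recovers $\rho$. Since this is precisely the content of \cite[Theorem 3.1]{BHL2016}, the cleanest route is to confirm that our Assumption~\ref{assump2} and the hypothesis $q_i>0$ on $\K_i$ supply exactly the conditions required there and then cite the result.
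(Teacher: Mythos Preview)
The paper does not supply its own proof of Theorem~\ref{th::eqcs}: the statement is recorded purely as a citation of \cite[Theorems~3.1 and~3.2]{BHL2016}, with no argument following it. Your closing remark---that the cleanest route is to verify that Assumption~\ref{assump2} and $q_i>0$ on $\K_i$ match the hypotheses of \cite{BHL2016} and then cite---is therefore exactly what the paper does, and nothing more is needed.

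That said, the proof sketch you outline before arriving at that conclusion has a structural mismatch worth noting. For the convergence $\rho^{\rm c}_k\nearrow\rho$ you propose to mimic Theorem~\ref{th::convts}(ii): approximate each $p_i/q_i$ from below by $h_i\in\RR[\bx(I_i)]$ and certify positivity of the aggregated expression $p_1+\bigl(\sum_{i\ge2}h_i-(\rho-\varepsilon)\bigr)q_1$. But the dual \eqref{eq::csssdpdual} of the correlatively sparse hierarchy does not have this shape. Its free variables are scalars $c_i$ and coupling polynomials $h_{i,j}\in\RR[\bx(I_i\cap I_j)]$ attached to overlapping index pairs, with constraints $p_i-\bigl(c_i+\sum_{j\in U_i}h_{i,j}-\sum_{j\in V_i}h_{j,i}\bigr)q_i\in Q_k(\{g_j\}_{j\in J_i})$; there is no distinguished first denominator absorbing an aggregate, so the ``feed all $h_i$ into the $q_1$ term'' mechanism from Section~\ref{sec::dual} does not transfer. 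The argument in \cite{BHL2016} instead runs on the primal side: Assumption~\ref{assump2}(iii) forces boundedness of the feasible pseudo-moments of \eqref{eq::gmpcssdp}, one extracts a limit, and Putinar's moment characterization yields genuine measures feasible for \eqref{eq::gmpcs}. Likewise, for the hard exactness direction $\rho\le\rho^{\rm c}$ the operative tool is a measure-gluing lemma under RIP (building a probability measure on $\K$ whose $I_i$-marginals equal $q_i\,\mathrm{d}\mu_i$), not the sparse Positivstellensatz applied to polynomial approximants; your paragraph blends these two dual devices together.
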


We can derive the Lagrange dual of \eqref{eq::gmpcssdp} which reads as
\begin{equation*}\label{eq::csssdpdual}
\left\{
\begin{aligned}
   \sup_{c_i, h_{i,j}}&\ \sum_{i=1}^N c_i\\
   \text{s.t.}\,\,&\ p_i-\left(c_i+\sum_{j\in U_i}h_{i,j}-\sum_{j\in V_i}h_{j,i}\right)q_i\in
   Q_k(\{g_j\}_{j\in J_i}),\ c_i\in\RR, \ i\in[N],\\
   &\ h_{i,j}\in\RR[\bx(I_i\cap I_j)]_{2k-\max\{\deg(q_i),\deg(q_j)\}},\quad 
    j\in U_i,\ i\in[N-1].
\end{aligned}
\right.\tag{CD$k$}
\end{equation*} 
\begin{proposition}\label{prop::sd}
    Under Assumption \ref{assump2} (iii), the strong duality holds between \eqref{eq::gmpcssdp}
    and \eqref{eq::csssdpdual} for all $k\ge d_{\min}$.
\end{proposition}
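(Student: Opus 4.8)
The plan is to establish strong duality between the primal SDP \eqref{eq::gmpcssdp} and its Lagrange dual \eqref{eq::csssdpdual} by exhibiting a strictly feasible point for the dual and invoking standard conic duality theory (Slater's condition). First I would set up the abstract duality framework: \eqref{eq::gmpcssdp} is a linear conic program over a product of moment-matrix cones with finitely many linear equality constraints (the normalization constraints $L_{\by_i}(q_i)=1$ and the coupling constraints $L_{\by_i}(\bx^{\ba}q_i)=L_{\by_j}(\bx^{\ba}q_j)$), while \eqref{eq::csssdpdual} arises from it by forming the Lagrangian with multipliers $c_i$ and $h_{i,j}$. I would verify that the Lagrangian computation indeed produces \eqref{eq::csssdpdual} as the formal dual, so that weak duality $\rho^{\rm{c}}_k\ge\sup\eqref{eq::csssdpdual}$ holds automatically.

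The crux is to certify strong duality with no gap, for which I would aim to produce a \emph{strictly} feasible (Slater) point on the dual side, so that the usual conic-duality theorem (e.g.\ \cite{shapiro2009semi}) forces the dual optimum to be attained and to equal the primal optimum. The natural candidate is to take all coupling multipliers $h_{i,j}\equiv0$ and choose each $c_i$ to be a sufficiently negative constant; then each constraint polynomial becomes $p_i-c_iq_i$. The key point is that, because of Assumption~\ref{assump2}(iii), the quadratic module $Q(\{g_j\}_{j\in J_i})$ is Archimedean on $\K_i$ (it contains the ball constraint $M_i-\sum_{\ell\in I_i}x_\ell^2$), and since $q_i>0$ on the compact set $\K_i$, for $c_i$ small enough the polynomial $p_i-c_iq_i$ is strictly positive on $\K_i$. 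By Putinar's Positivstellensatz (Theorem~\ref{th::PP} applied to each subsystem) it then lies in the \emph{interior} of the truncated quadratic module for all sufficiently large relaxation orders, which provides the interior/Slater point needed.

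The step I expect to be the main obstacle is making precise what "strictly feasible/interior point" means for the SOS cone $Q_k(\{g_j\}_{j\in J_i})$ and confirming that the constructed point actually lies in its relative interior rather than merely in the cone. Membership via Putinar only guarantees a representation $p_i-c_iq_i=\sigma_0+\sum_{j\in J_i}\sigma_jg_j$ with SOS $\sigma_j$, but for Slater one needs the associated Gram matrices to be \emph{positive definite}; this typically requires perturbing by adding a small positive multiple of a strictly-PD SOS (for instance using the Archimedean element to absorb a term $\varepsilon(M_i-\sum_{\ell\in I_i}x_\ell^2+\text{const})$) so that $c_i$ can be decreased slightly to keep strict positivity while the Gram matrices become interior points of the PSD cone. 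I would therefore argue that, since $k\ge d_{\min}$ guarantees the degree budget is large enough to represent the ball element, one can always pad the representation to obtain strictly positive definite Gram matrices, yielding a genuine Slater point; strong duality and attainment then follow from the standard theorem, giving $\rho^{\rm{c}}_k=\sup\eqref{eq::csssdpdual}$.
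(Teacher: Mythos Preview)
Your approach differs substantially from the paper's, and it contains a genuine gap.

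The paper does \emph{not} construct a dual Slater point directly. Instead it argues on the primal side: a feasible point of \eqref{eq::gmpcssdp} is obtained from the moments of a feasible measure of \eqref{eq::gmpcs} (whose existence comes from \cite{BHL2016}), and Assumption~\ref{assump2}(iii) (the ball constraint in each $J_i$) forces the feasible set of \eqref{eq::gmpcssdp} to be compact for \emph{every} $k\ge d_{\min}$, via the localizing-matrix constraint associated with $M_i-\sum_{\ell\in I_i}x_\ell^2$. Nonempty and bounded primal optimal set then yields strict dual feasibility by the equivalence in \cite{Tr2005}, and strong duality follows.

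Your direct construction runs into the following problem. Putinar's theorem only guarantees that $p_i-c_iq_i\in Q_k(\{g_j\}_{j\in J_i})$ for \emph{some sufficiently large} $k$, not for every $k\ge d_{\min}$. The ``padding'' paragraph does not close this gap: being able to represent the ball element $M_i-\sum_{\ell\in I_i}x_\ell^2$ within the degree budget has no bearing on whether $p_i-c_iq_i$ itself admits a weighted SOS representation of degree $\le 2k$ when $k$ is small; and you cannot perturb a representation into one with positive definite Gram matrices unless you already have \emph{some} representation at that order. So as written your argument only yields strong duality for all sufficiently large $k$, which is weaker than the proposition.

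A secondary issue: you invoke $q_i>0$ on $\K_i$, but the proposition assumes only Assumption~\ref{assump2}(iii) (together with the standing Assumption~\ref{as::0}, which gives $q_i>0$ on $\K$, not on the possibly larger set $\K_i$). The paper's primal-side argument does not need positivity of $q_i$ on $\K_i$, because compactness of the primal feasible set follows from the ball localizing constraint and the normalization $L_{\by_i}(q_i)=1$ alone.
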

\begin{proof}
Let $\{\mu_i\}$ be a feasible solution of \eqref{eq::gmpcs} (see the proof of \cite[Theorem 3.1]{BHL2016}
for the existence). For each $i\in[N]$, let $\by_i=(y_{i\ba})_{\ba\in\N^{|I_i|}_{2k}}$ be such that 
$y_{i\ba}=\int_{\K_i}\bx^{\ba}\ud\mu_i$ for $\ba\in\N^{|I_i|}_{2k}$.
Then, $\{\by_i\}$ is a feasible solution of \eqref{eq::gmpcssdp}. 
As Assumption \ref{assump2} (iii) holds, 
according to the proof of \cite[Theorem 2.2]{BHL2016},
the feasible set of \eqref{eq::gmpcssdp} is compact, which implies that
the optimal solution set of \eqref{eq::gmpcssdp} is nonempty and bounded.
Therefore, \eqref{eq::csssdpdual} is strictly feasible \cite{Tr2005}
and the strong duality holds \cite[Theorem 4.1.3]{SS2000}. 
\end{proof}

Let
\begin{equation}
\bA\coloneqq\bigcup_{i=1}^{N}\left(\supp{p_i}\cup\supp{q_i}\right)\cup\bigcup_{j=1}^{m}\supp{g_j},
\end{equation}
and $\bR$ be the set of sign symmetries of $\bA$.
Now we consider the following sign symmetry adapted version of \eqref{eq::gmpcs}:
\begin{equation*}\label{eq::gmpcss}
\rho^{\rm{cs}}\coloneqq\left\{
	\begin{aligned}
		\inf_{\mu_i\in\mathcal{M}(\K_i)_+^{\bR}}&\ \sum_{i=1}^N \int_{\K_i}
		p_i\ud \mu_i\\
		\text{s.t.}\quad\,\,&\ \int_{\K_i} q_i\ud\mu_i=1,\quad i\in[N],\\
		&\ \pi_{ij}(q_i\ud\mu_i)=\pi_{ji}(q_j\ud\mu_j),\quad  j\in U_i, i\in[N-1].
	\end{aligned}\right.\tag{CSP}
\end{equation*}

\begin{theorem}
Let Assumption \ref{assump2} hold and assume that $q_i>0$ on $\K_i$ for $i\in[N]$. 
It holds $\rho^{\rm{cs}}=\rho$.
\end{theorem}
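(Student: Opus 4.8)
The plan is to establish the equality $\rho^{\rm{cs}}=\rho$ by showing both inequalities, leveraging the already-established facts $\rho^{\rm{c}}=\rho$ (Theorem~\ref{th::eqcs}) and the averaging technique from the proof of Theorem~\ref{cor::sdual}. Since \eqref{eq::gmpcss} imposes the additional constraint that each $\mu_i$ be $\bR$-invariant, its feasible set is a subset of that of \eqref{eq::gmpcs}, and hence $\rho^{\rm{cs}}\ge\rho^{\rm{c}}=\rho$ is immediate. The content of the theorem is therefore the reverse inequality $\rho^{\rm{cs}}\le\rho$.

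\medskip
\noindent\textbf{The symmetrization step.} For the reverse inequality, I would take an arbitrary feasible solution $\{\mu_i\}$ of \eqref{eq::gmpcs} and symmetrize each measure by averaging over the sign-symmetry group. Concretely, for each $i\in[N]$ define
\[
\tilde{\mu}_i\coloneqq\frac{1}{|\bR|}\sum_{\br\in\bR}\mu_i^{\br},
\]
which is manifestly $\bR$-invariant, so that $\tilde{\mu}_i\in\mathcal{M}(\K_i)_+^{\bR}$. Here I must first verify that each $\mu_i^{\br}$ is well-defined, i.e.\ that $\br(\K_i)=\K_i$; this follows because $\bR$ is the set of sign symmetries of the global support set $\bA$, which contains $\supp{g_j}$ for all $j\in J_i$, so by the argument of Lemma~\ref{sec4:lm2} each $g_j$ with $j\in J_i$ satisfies $\theta_{\br}(g_j)=g_j$ and thus $\br(\K_i)=\K_i$.

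\medskip
\noindent\textbf{Checking feasibility and the objective.} The key point is that symmetrization preserves both the objective value and the constraints. For the objective, since $\br\in\bR\subseteq\bR(\supp{p_i})$, Lemma~\ref{sec4:lm1}(2) gives $\int_{\K_i}p_i\,\ud\mu_i^{\br}=\int_{\K_i}p_i\,\ud\mu_i$, so averaging yields $\int_{\K_i}p_i\,\ud\tilde{\mu}_i=\int_{\K_i}p_i\,\ud\mu_i$ and the total objective is unchanged. The normalization constraint $\int_{\K_i}q_i\,\ud\tilde{\mu}_i=1$ follows the same way from Lemma~\ref{sec4:lm1}(2) applied to $q_i$. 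For the coupling constraints $\pi_{ij}(q_i\,\ud\mu_i)=\pi_{ji}(q_j\,\ud\mu_j)$, I would check that projection and the $\br$-action commute appropriately on the variables $\bx(I_i\cap I_j)$: applying $\pi_{ij}$ to $q_i\,\ud\mu_i^{\br}$ and using Lemma~\ref{sec4:lm1}(3) on each monomial $\bx^{\ba}$ with $\ba\in\N^{(ij)}$ produces a factor $(-1)^{\br^{\intercal}\ba}$ that matches on both sides, so the equality of the two projected measures is preserved under averaging.

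\medskip
\noindent\textbf{Main obstacle and conclusion.} The delicate part is the coupling constraint, because $\bR$ is the sign-symmetry set of the \emph{global} $\bA$ rather than of the individual local supports, so one must confirm that restricting $\br$ to the shared variables $\bx(I_i\cap I_j)$ still yields a consistent sign on both sides of $\pi_{ij}(q_i\,\ud\mu_i)=\pi_{ji}(q_j\,\ud\mu_j)$. I expect this to go through because the constraint is an equality of measures on $\K_{ij}$ and is tested against monomials $\bx^{\ba}$, $\ba\in\N^{(ij)}$; applying Lemma~\ref{sec4:lm1}(3) to both sides introduces the \emph{same} scalar $(-1)^{\br^{\intercal}\ba}$, which cancels, preserving the equality after summing over $\br\in\bR$. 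Having produced a feasible $\{\tilde{\mu}_i\}$ of \eqref{eq::gmpcss} with the same objective value as the chosen feasible point of \eqref{eq::gmpcs}, taking the infimum over all feasible $\{\mu_i\}$ gives $\rho^{\rm{cs}}\le\rho^{\rm{c}}=\rho$. Combined with the trivial inequality, this yields $\rho^{\rm{cs}}=\rho$.
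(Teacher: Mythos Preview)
Your proposal is correct and follows essentially the same approach as the paper: restrict the feasible set to obtain $\rho^{\rm{cs}}\ge\rho$, then symmetrize an arbitrary feasible $\{\mu_i\}$ of \eqref{eq::gmpcs} via $\tilde{\mu}_i=\frac{1}{|\bR|}\sum_{\br\in\bR}\mu_i^{\br}$ and invoke Lemma~\ref{sec4:lm1} to check that the objective and all constraints are preserved. You are in fact slightly more explicit than the paper in verifying $\br(\K_i)=\K_i$ and in spelling out why the coupling constraints survive averaging; the paper compresses these checks into a single sentence.
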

\begin{proof}
Since $\mathcal{M}(\K_i)^{\bR}_+$ is a subset of $\mathcal{M}(\K_i)_+$ for each $i\in[N]$, we immediately have $\rho^{\rm{cs}}\ge\rho$. For the converse, fix an arbitrary $\varepsilon>0$ and 
suppose that $\{\mu_i\}$ is any feasible solution of \eqref{eq::gmpcs} with 
$\sum_{i=1}^N \int_{\K}p_i\ud\mu_i<\rho+\varepsilon$. 
For each $i\in[N]$, let us define a new measure $\tilde{\mu}_i\in\mathcal{M}(\K_i)^{\bR}_+$ by $\tilde{\mu}_i\coloneqq\frac{1}{|\bR|}\sum_{\br\in\bR}\mu_i^{\br}$. 
For any $j\in U_i, i\in[N-1]$, by Lemma \ref{sec4:lm1}, one can check that 
$\pi_{ij}(q_i\ud\tilde{\mu_i})=\pi_{ji}(q_j\ud\tilde{\mu}_j).$
Moreover, by Lemma \ref{sec4:lm1} again, $\sum_{i=1}^N \int_{\K}p_i\ud\tilde{\mu}_i=\sum_{i=1}^N \int_{\K}p_i\ud\mu_i<\rho+\varepsilon$. 
As $\varepsilon>0$ is arbitrary, $\rho^{\rm{cs}}\le\rho$. 
\end{proof}

For each $i\in[N]$ and each $k\in\N$, define a binary matrix $B^{\bA}_{i,k}$ indexed by $\N^{|I_i|}_k$ (we embed $\N^{|I_i|}_k$ into $\N^{n}_k$ in the natural way) such that
\[[B^{\bA}_{i,k}]_{\ba,\bb}\coloneqq\left\{
		\begin{array}{ll}
			1, & \text{if}\ \ba+\bb\in
			\overline{\bA},\\
		0, & \text{otherwise}.
		\end{array}\right.\]
The sign symmetry adapted version of \eqref{eq::gmpcssdp} is given by
\begin{equation*}\label{eq::gmpcsssdp}
\rho^{\rm{cs}}_{k}\coloneqq\left\{
	\begin{aligned}
		\inf_{\by_i}&\ \sum_{i=1}^N L_{\by_{i}}(p_i)\\
		\text{s.t.}&\ B^{\bA}_{i,k}\circ \bM_k(\by_i, I_i)\succeq 0,\quad i\in[N],\\
		&\ B^{\bA}_{i,k-d_j}\circ \bM_{k-d_j}(g_j\by_i, I_i)\succeq 0,\quad j\in J_i,i\in[N],\\
		&\ L_{\by_i}(q_i)=1,\quad i\in[N],\\
		&\ L_{\by_{i}}(\bx^{\ba}q_i)=L_{\by_j}(\bx^{\ba}q_j),\\
  &\ \forall\ba\in\N_{2k-\max\{\deg(q_i),\deg(q_j)\}}^{(ij)}\cap\overline{\bA},j\in U_i,i\in[N-1].
	\end{aligned}\right.\tag{CSP$k$}
\end{equation*}

\begin{theorem}\label{th::eqcss}
Let Assumption \ref{assump2} hold and assume that $q_i>0$ on $\K_i$ for $i\in[N]$. 
It holds $\rho^{\rm{cs}}_{k}=\rho^{\rm{c}}_{k}$ for all $k\ge d_{\min}$. Consequently, $\rho^{\rm{cs}}_{k}\nearrow \rho$ as $k\to\infty$.
\end{theorem}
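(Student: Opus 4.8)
The plan is to prove the two inequalities $\rho^{\rm{cs}}_{k}\le\rho^{\rm{c}}_{k}$ and $\rho^{\rm{c}}_{k}\le\rho^{\rm{cs}}_{k}$ separately; once equality is established the convergence $\rho^{\rm{cs}}_{k}\nearrow\rho$ is immediate from Theorem \ref{th::eqcs}. The single observation driving both directions is that Hadamard masking by $B^{\bA}_{i,k}$ is exactly restriction of a moment sequence to $\overline{\bA}$, together with the fact that $\supp{p_i}$, $\supp{q_i}$, and $\supp{g_j}$ all lie in $\bA$, and that $\bA\subseteq\overline{\bA}$ (since every $\br\in\bR=\bR(\bA)$ satisfies $\br^{\intercal}\ba\equiv0\,(\mathrm{mod}\,2)$ for $\ba\in\bA$). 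A useful consequence is that for $\bbg\in\bA$ and any $\bb$, one has $\ba+\bbg+\bb\in\overline{\bA}\iff\ba+\bb\in\overline{\bA}$, because $\br^{\intercal}\bbg\equiv0\,(\mathrm{mod}\,2)$ for all $\br\in\bR$.

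For $\rho^{\rm{cs}}_{k}\le\rho^{\rm{c}}_{k}$, I would argue that any feasible $\{\by_i\}$ of \eqref{eq::gmpcssdp} is also feasible for \eqref{eq::gmpcsssdp}. Since $\ba+\bb\in\overline{\bA}$ defines an equivalence relation on indices via $\ba\mapsto(\br^{\intercal}\ba\bmod2)_{\br\in\bR}$, the matrix $B^{\bA}_{i,k}$ is block-diagonal up to permutation, so $B^{\bA}_{i,k}\circ\bM_k(\by_i,I_i)$ is precisely the block-diagonal part of $\bM_k(\by_i,I_i)$ and is therefore positive semidefinite whenever $\bM_k(\by_i,I_i)\succeq0$; the same holds for the localizing matrices. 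The coupling constraints of \eqref{eq::gmpcsssdp} form a subset of those of \eqref{eq::gmpcssdp}, so feasibility transfers and the objective is unchanged.

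For the reverse inequality, the plan is to symmetrize: given a feasible $\{\by_i\}$ of \eqref{eq::gmpcsssdp}, define $\tilde{\by}_i$ by $\tilde{y}_{i\ba}=y_{i\ba}$ if $\ba\in\overline{\bA}$ and $\tilde{y}_{i\ba}=0$ otherwise, and show $\{\tilde{\by}_i\}$ is feasible for \eqref{eq::gmpcssdp} with the same objective. The key matrix identities to verify are $\bM_k(\tilde{\by}_i,I_i)=B^{\bA}_{i,k}\circ\bM_k(\by_i,I_i)$, which is immediate from the definitions, and $\bM_{k-d_j}(g_j\tilde{\by}_i,I_i)=B^{\bA}_{i,k-d_j}\circ\bM_{k-d_j}(g_j\by_i,I_i)$, where the second uses $\supp{g_j}\subseteq\bA$ via the consequence noted above to pull the indicator of $\overline{\bA}$ outside the sum over $\supp{g_j}$. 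These two identities make the positive semidefinite constraints of \eqref{eq::gmpcssdp} follow directly from those of \eqref{eq::gmpcsssdp}. Because $\supp{p_i},\supp{q_i}\subseteq\overline{\bA}$, the objective $\sum_i L_{\tilde{\by}_i}(p_i)$ and the normalizations $L_{\tilde{\by}_i}(q_i)=1$ coincide with those of $\{\by_i\}$.

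The hard part is verifying the \emph{full} coupling constraints of \eqref{eq::gmpcssdp}, namely $L_{\tilde{\by}_i}(\bx^{\ba}q_i)=L_{\tilde{\by}_j}(\bx^{\ba}q_j)$ for every $\ba\in\N^{(ij)}_{2k-\max\{\deg(q_i),\deg(q_j)\}}$, not merely for those $\ba$ in $\overline{\bA}$. Here I would write $L_{\tilde{\by}_i}(\bx^{\ba}q_i)=\sum_{\bbg\,:\,\ba+\bbg\in\overline{\bA}}(q_i)_{\bbg}\,y_{i,\ba+\bbg}$ and use $\supp{q_i}\subseteq\bA$ to deduce $\ba+\bbg\in\overline{\bA}\iff\ba\in\overline{\bA}$ for every $\bbg\in\supp{q_i}$, then split into cases: when $\ba\in\overline{\bA}$ both sides reduce to $L_{\by_i}(\bx^{\ba}q_i)$ and $L_{\by_j}(\bx^{\ba}q_j)$, which are equal by the masked coupling constraint of \eqref{eq::gmpcsssdp}; when $\ba\notin\overline{\bA}$ both sides vanish identically, since $\supp{q_i}\subseteq\bA$ forces every surviving index out of $\overline{\bA}$. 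This case analysis, and in particular the vanishing when $\ba\notin\overline{\bA}$, is the crux; it hinges essentially on using a single global support set $\bA$ (and hence a single $\bR$) across all blocks rather than the per-ratio sets of the preceding subsection, mirroring the role of the hypothesis $\bR_1=\cdots=\bR_N$ in Theorem \ref{th::convts}. Combining the two inequalities yields $\rho^{\rm{cs}}_{k}=\rho^{\rm{c}}_{k}$, and the convergence follows from Theorem \ref{th::eqcs}.
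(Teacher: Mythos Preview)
Your proposal is correct and follows essentially the same approach as the paper: both directions are proved by showing that a feasible point of one problem is (after zeroing out entries outside $\overline{\bA}$ for the nontrivial direction) feasible for the other with the same objective. Your write-up supplies the details behind the paper's ``one can easily check'', in particular the case split on $\ba\in\overline{\bA}$ versus $\ba\notin\overline{\bA}$ for the coupling constraints, which the paper leaves implicit.
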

\begin{proof}
Let $\{\by_i\}$ be a feasible solution to \eqref{eq::gmpcssdp}.
Note that $B^{\mathcal{A}}_{i,k}\circ \bM_k(\by_i, I_i)$ (resp. $B^{\mathcal{A}}_{i,k-d_j}\circ
\bM_{k-d_j}(g_j\by_i, I_i)$, $j\in J_i$) consists of diagonal blocks of $\bM_k(\by_i, I_i)$
(resp. $\bM_{k-d_j}(g_j\by_i, I_i)$, $j\in J_i$) for all $i\in[N]$. 
Thus, $\{\by_i\}$ is also feasible to \eqref{eq::gmpcsssdp}.
So $\rho^{\rm{cs}}_{k}\le\rho^{\rm{c}}_{k}$.

On the other hand, let $\{\by_i\}$ be any feasible solution to \eqref{eq::gmpcsssdp}. 
For every $i\in[N]$, we define a pseudo-moment sequence $\by'_i=(y'_{i{\ba}})_{\ba\in\N^{|I_i|}_k}$ 
as follows:
\begin{equation*}
    y'_{i{\ba}}=\begin{cases}
    y_{i\ba},&\text{ if }\ba\in\overline{\bA},\\
    0,&\text{ otherwise}.
    \end{cases}
\end{equation*}
By the definition of $\mathcal{A}$, one can easily check that $\{\by'_i\}$ is a feasible solution to \eqref{eq::gmpcssdp} and $\sum_{i=1}^N L_{\by'_i}(p_i)=\sum_{i=1}^N L_{\by_{i}}(p_i)$. So $\rho^{\rm{cs}}_{k}\ge\rho^{\rm{c}}_{k}$ and it follows $\rho^{\rm{cs}}_{k}=\rho^{\rm{c}}_{k}$ as desired.
\end{proof}

The dual of \eqref{eq::gmpcsssdp} reads as
\begin{equation*}\label{eq::csssdp}
\left\{
\begin{aligned}
   \sup_{c_i, h_{i,j}}&\ \sum_{i=1}^N c_i\\
   \text{s.t.}\,\,&\ p_i-\left(c_i+\sum_{j\in U_i}h_{i,j}-\sum_{j\in V_i}h_{j,i}\right)q_i\in
   Q_k(\{g_j\}_{j\in J_i},\bA),\ c_i\in\RR, \ i\in[N],\\
   &\ h_{i,j}\in\RR[\bx(I_i\cap I_j)]_{2k-\max\{\deg(q_i),\deg(q_j)\}}\cap\RR[\overline{\bA}],\quad 
    j\in U_i,i\in[N-1].
\end{aligned}
\right.\tag{CSD$k$}
\end{equation*} 
\begin{proposition}
    Under Assumption \ref{assump2} (iii), the strong duality holds between \eqref{eq::gmpcsssdp}
    and \eqref{eq::csssdp} for all $k\ge d_{\min}$.
\end{proposition}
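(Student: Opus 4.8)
The plan is to replicate the argument used for Proposition~\ref{prop::sd}: I would show that the primal \eqref{eq::gmpcsssdp} has a nonempty and bounded (effective) feasible set, and then invoke the conic-duality results of \cite{Tr2005,SS2000} to conclude that its dual \eqref{eq::csssdp} is strictly feasible and that strong duality holds. Essentially all of the difficulty sits in the boundedness claim, because the Hadamard masking $B^{\bA}_{i,k}\circ\,\cdot$ only relaxes the semidefiniteness constraints of \eqref{eq::gmpcssdp} and, read literally, leaves every moment $y_{i\ba}$ with $\ba\notin\overline{\bA}$ entirely unconstrained.

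First I would dispose of these free moments. Since $\supp{p_i}$, $\supp{q_i}$ and each $\supp{g_j}$ are contained in $\bA\subseteq\overline{\bA}$, and $\overline{\bA}$ is closed under addition of its own elements, the entries $y_{i\ba}$ with $\ba\notin\overline{\bA}$ appear neither in the objective nor in any constraint of \eqref{eq::gmpcsssdp}. They may therefore be fixed to zero without altering the optimal value, yielding an equivalent reduced SDP whose variables are exactly the moments indexed by $\overline{\bA}$ and whose Lagrangian dual is precisely \eqref{eq::csssdp}. It thus suffices to prove that this reduced primal is nonempty and has a bounded feasible set.

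Nonemptiness is inherited from \eqref{eq::gmpcssdp}: exactly as in Proposition~\ref{prop::sd}, the moment sequences of a feasible solution $\{\mu_i\}$ of \eqref{eq::gmpcs} furnish a feasible point of \eqref{eq::gmpcssdp}, and the first half of the proof of Theorem~\ref{th::eqcss} shows that such a point is also feasible for \eqref{eq::gmpcsssdp}, since the masked matrices are diagonal blocks of the full moment and localizing matrices. For boundedness I would use the converse direction of that same proof: given any feasible $\{\by_i\}$ of the reduced problem, the zero-padded sequence $\{\by_i'\}$ is feasible for \eqref{eq::gmpcssdp} and coincides with $\{\by_i\}$ at every index in $\overline{\bA}$. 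Under Assumption~\ref{assump2}(iii) the feasible set of \eqref{eq::gmpcssdp} is compact by \cite[Theorem~2.2]{BHL2016}, so $\{\by_i'\}$, and hence the $\overline{\bA}$-components of $\{\by_i\}$, are uniformly bounded, which bounds the reduced feasible set.

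The hard part is precisely this boundedness step, the subtlety being to confirm that the masking does not destroy the moment bounds supplied by the ball constraints of Assumption~\ref{assump2}(iii). The reduction above is what makes this transparent: every diagonal entry of the masked moment and localizing matrices is retained (as $2\ba\in\overline{\bA}$ always), and for any pair with $\ba+\bb\in\overline{\bA}$ the $2\times2$ principal minor enforcing $y_{i,\ba+\bb}^2\le y_{i,2\ba}\,y_{i,2\bb}$ lies within a single diagonal block; hence the masked constraints still control every moment that actually occurs, which is exactly what the correspondence $\by_i\mapsto\by_i'$ encodes. With nonemptiness and boundedness of the reduced primal established, \cite{Tr2005} yields strict feasibility of \eqref{eq::csssdp} and \cite[Theorem~4.1.3]{SS2000} gives the claimed strong duality for all $k\ge d_{\min}$.
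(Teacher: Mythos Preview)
Your proposal is correct and follows essentially the same approach as the paper: both argue that only the moments indexed by $\overline{\bA}$ actually occur in \eqref{eq::gmpcsssdp}, use the zero-padding correspondence $\by_i\mapsto\by_i'$ from the proof of Theorem~\ref{th::eqcss} to embed feasible points into \eqref{eq::gmpcssdp}, and then import the compactness and strong-duality conclusions of Proposition~\ref{prop::sd}. Your version simply spells out in more detail why the masked constraints still bound every relevant moment, which the paper leaves implicit.
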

\begin{proof}
   Note that for any feasible solution $\{\by_i\}$ of \eqref{eq::gmpcsssdp}, only the entries 
   $y_{i\ba}, \ba\in\overline{\mathcal{A}}$, are involved in \eqref{eq::gmpcsssdp}. By the proof of
   Theorem \ref{th::eqcss}, the corresponding point $\{\by'_i\}$ is feasible to \eqref{eq::gmpcssdp}.
   Then, the proof of Proposition \ref{prop::sd} indicates that the feasible set of \eqref{eq::gmpcsssdp}
   is compact. Hence, the strong duality holds between \eqref{eq::gmpcsssdp} and \eqref{eq::csssdp}
   as proved in Proposition \ref{prop::sd}.
\end{proof}

\section{Numerical experiments}\label{sec::num}
In this section, we conduct numerical experiments to test the performance of 
the sign symmetry adapted approaches against the approaches in \cite{BHL2016}.
We use the Julia package {\tt TSSOS}\footnote{{\tt TSSOS} is publically available at https://github.com/wangjie212/TSSOS.} 
\cite{magron2021tssos} to build the SDP relaxations and rely on {\tt MOSEK} \cite{mosek}
to solve them. Throughout this section, $k$ stands for the relaxation order.
All numerical experiments were performed on a desktop computer with Intel(R) Core(TM) i9-10900 CPU@2.80GHz and 64G RAM. 

\subsection{Comparison of \eqref{eq::gmpdualtssos} with \eqref{eq::gmpdualsos}}
We begin by presenting three examples without correlative sparsity.

\begin{example}\label{ex1}
Consider the problem
\begin{equation}\label{ex::pqa}
\min_{\bx\in\RR^3}\ \sum_{a=1/M}^{1-1/M}\frac{p_a(\bx)}{q_a(\bx)}\quad \text{s.t.} \ x_1^2+x_2^2+x_3^2=3,
\end{equation}
where
\[
\begin{aligned}
p_a(\bx)=&\,a^4(x_1^{6d}+x_2^{6d}+x_3^{6d})+(x_1^{4d}x_2^{2d}+x_2^{4d}x_3^{2d}+
x_3^{4d}x_1^{2d})+\\
&\,a^8(x_1^{2d}x_2^{4d}+x_2^{2d}x_3^{4d}+x_3^{2d}x_1^{4d}),\\
q_a(\bx)=&\,2a^6(x_1^{4d}x_2^{2d}+x_2^{4d}x_3^{2d}+x_3^{4d}x_1^{2d})+
    2a^2(x_1^{2d}x_2^{4d}+x_2^{2d}x_3^{4d}+x_3^{2d}x_1^{4d})+\\
    &\,3(1-2a^2+a^4-2a^6+a^8)x_1^{2d}x_2^{2d}x_3^{2d},
\end{aligned}
\]
and $M, d$ are positive integers. 
Replacing $x_i^d$'s by $y_i$'s in $p_a(\bx), q_a(\bx)$ and denoting the resulting
polynomials by $\tilde{p}_a(\by), \tilde{q}_a(\by)$ with $\by=(y_1, y_2, y_3)$, 
one can show that for all $0<a<1$, $\tilde{p}_a(\by)-\tilde{q}_a(\by)\ge 0$ on $\RR^3$
with ten zeros: $\{(1, \pm 1, \pm 1), (\pm a, 1, 0), (0, \pm a, 1), (1, 0, \pm a)\}$ \cite{Reznick96}.
Then, it is obvious that the optimum of \eqref{ex::pqa} is $M-1$.
Moreover, we can rewrite $q_a(\bx)$ as 
\[
\begin{aligned}
q_a(\bx)=&\,2a^6(x_1^{4d}x_2^{2d}+x_2^{4d}x_3^{2d}+x_3^{4d}x_1^{2d}-3x_1^{2d}x_2^{2d}x_3^{2d})+\\
&\,2a^2(x_1^{2d}x_2^{4d}+x_2^{2d}x_3^{4d}+x_3^{2d}x_1^{4d}-3x_1^{2d}x_2^{2d}x_3^{2d})+\\
    &\,3(1+a^4+a^8)x_1^{2d}x_2^{2d}x_3^{2d}.
\end{aligned}
\]
Then, by the arithmetic-geometric inequality, we see that $q_a(\bx)>0$ for all 
feasible points $\bx$. Hence, \eqref{ex::pqa} satisfies Assumption \ref{as::0} 
for all positive integers $M$ and $d$.
For $M\in\{6,8,10\}$ and $d\in\{2,3,4\}$, we solve \eqref{ex::pqa} using the minimum relaxation order $k=3d$ and present the computational results in Table \ref{tab::2}. From the table, we see that by exploiting sign symmetries, we gain speedup by $1\sim2$ magnitudes.

\begin{table}[htbp]\label{tab::2}
\caption{Computational results for \eqref{ex::pqa}.}
\centering
\begin{tabular}{ccccccccc}
\midrule[0.8pt]
\multirow{2}{*}{$M$} &&\multicolumn{3}{c}{$\sup$\eqref{eq::gmpdualsos}/Time}&
&\multicolumn{3}{c}{$\sup$\eqref{eq::gmpdualtssos}/Time} \\
		\cline{3-5} \cline{7-9}
&& $d=2$ & $d=3$& $d=4$& & $d=2$ &$d=3$& $d=4$\\
\midrule[0.4pt]
$6$&&5.00/0.31s&5.00/3.85s&5.00/36.1s &&5.00/0.06s&5.00/0.38s&5.00/1.17s \\
$8$&&7.00/0.43s&7.00/4.25s&7.00/52.4s &&7.00/0.09s&7.00/0.54s&7.00/1.84s \\
$10$&&9.00/0.54s&9.00/5.34s&9.00/126s &&9.00/0.12s&9.00/0.71s&9.00/2.43s \\
\midrule[0.8pt]
\end{tabular}
\end{table}

\end{example}

\begin{example}
Consider the following problem adapted from \cite{BHL2016}:
\begin{equation}\label{ex::css4}
\min_{\bx\in\RR^{n}}-\sum_{i=1}^N\frac{1}{\sum_{j=1}^n(x_j^2-a_{ij})^2 + c_i}\quad\textrm{ s.t. } 60-\sum_{i=1}^n(x_i^2-5)^2\ge0.
\end{equation} 
The data $\{a_{ij}\},\{c_i\}$ are given in \cite[Table 16]{ali2005numerical} with $N=30,n\in\{5,10\}$. For $n=5$, the optimum of \eqref{ex::css4} is $-10.4056$, and for $n=10$, the optimum is $-10.2088$. We present the computational results in Table \ref{tab::7}. From the table, we see that 1) the global optimum is achieved at $k=5$ for $n=5$ and is achieved at $k=4$ for $n=10$; 2) by exploiting sign symmetries, we gain speedup by $1\sim3$ magnitudes.

\begin{table}[htbp]\label{tab::7}
\caption{Computational results for \eqref{ex::css4}. The symbol `-' indicates that {\tt MOSEK} runs out of memory.}
\centering
 \resizebox{\linewidth}{!}{
\begin{tabular}{ccccccccc}
\midrule[0.8pt]
$n$ &&\multicolumn{3}{c}{$\sup$\eqref{eq::gmpdualsos}/Time}&
&\multicolumn{3}{c}{$\sup$\eqref{eq::gmpdualtssos}/Time} \\
\midrule[0.4pt]
\multirow{2}{*}{$5$}&& $k=3$ & $k=4$& $k=5$& & $k=3$ &$k=4$& $k=5$\\
\cline{3-5} \cline{7-9}
&&-49.4493/3.53s&-12.4737/27.8s&-10.4056/306s
&&-49.4493/0.19s&-12.4736/1.52s&-10.4056/5.66s \\
\midrule[0.8pt]
\multirow{2}{*}{$10$}&& $k=2$ & $k=3$& $k=4$& & $k=2$ &$k=3$& $k=4$\\
\cline{3-5} \cline{7-9}
&&-25.2408/4.68&-19.2391/1158s&- &&-25.2408/0.11&-19.2391/3.12s&-10.2088/46.8s \\
\midrule[0.8pt]
\end{tabular}}
\end{table}

\end{example}

\begin{example}
Now we carry out randomly generated problems of the form
\begin{equation}\label{ex::rand}
\min_{\bx\in\RR^n}\ -\sum_{i=1}^N\frac{1}{f_i^2+1} \quad \text{s.t.} \ \ x_1^2+\cdots+x_n^2\le 1,
\end{equation}
which are constructed as follows: 
1) randomly choose a subset of nonconstant monomials $\mathscr{M}$ from $[\bx]_d$ with prescribed probability $\xi$; 
2) randomly assign $3$ elements in $\mathscr{M}$ to each $f_i$ coupled with random coefficients in $[0,1]$. 
It is clear that the optimal value of \eqref{ex::rand} is $-N$.
We denote the set of \eqref{ex::rand} generated in such a way by \textsc{RandSRFO}$(N, n, d, \xi)$ and construct the following $12$ instances\footnote{They are available at https://wangjie212.github.io/jiewang/code.html.}:
\[
\begin{aligned}
P_{1}, P_2, P_3&\in \textsc{RandSRFO}(10, 6, 4, 0.05),\\
P_4, P_5, P_6&\in \textsc{RandSRFO}(8, 5, 5, 0.10),\\
P_7, P_8, P_9&\in \textsc{RandSRFO}(6, 8, 4, 0.03),\\
P_{10}, P_{11}, P_{12}&\in \textsc{RandSRFO}(12, 7, 5, 0.05).\\
\end{aligned}
\]
We solve those instances using the minimum relaxation order $k=d$ and present the computational results in Table \ref{tab::3}. From the table, we see that by exploiting sign symmetries, we gain speedup by $1\sim2$ magnitudes.

\begin{table}[htbp]\label{tab::3}
\caption{Computational results for \eqref{ex::rand}. The symbol `-' indicates that {\tt MOSEK} runs out of memory.}
\centering
 \resizebox{\linewidth}{!}{
	\begin{tabular}{rcccccc}
\midrule[0.8pt]
 & $P_1$& $P_2$ & $P_3$ &$P_4$ & $P_5$ & $P_6$ \\
\midrule[0.4pt]
		$\sup$\eqref{eq::gmpdualsos}/Time&-10.00/24.9s& -10.00/22.7s&-10.00/20.5s&-8.000/19.7s&-8.000/19.3s &-8.000/19.3s\\
		$\sup$\eqref{eq::gmpdualtssos}/Time&-10.00/1.97s &-10.00/0.92s&-10.00/0.89s&-8.000/1.57s&-8.000/1.41s & -8.000/1.61s\\
\midrule[0.8pt]
 & $P_7$& $P_8$ & $P_9$ &$P_{10}$ & $P_{11}$ & $P_{12}$ \\
\midrule[0.4pt]
		$\sup$\eqref{eq::gmpdualsos}/Time&-6.000/311s&-6.000/331s &-6.000/370s& - & -& -\\
		$\sup$\eqref{eq::gmpdualtssos}/Time&-6.000/3.04s&-6.000/2.18s & -6.000/2.79s&-12.00/19.7s&-12.00/19.9s&-12.00/19.8s\\
\midrule[0.8pt]
	\end{tabular}}		
\end{table}

\end{example}

\subsection{Comparison of \eqref{eq::csssdpdual} with \eqref{eq::csssdp}}
We will proceed with two examples for which both correlative sparsity and sign symmetries are present. 

\begin{example}
Consider
\begin{equation}\label{ex::css1}
\begin{aligned}
\min_{\bx\in\RR^{2N+2}}&\ \sum_{i=1}^N\frac{(x_{2i-1}^2+x_{2i}^2+x_{2i+1}^2)x_{2i-1}^2x_{2i}^2x_{2i+1}^2+x_{2i+2}^{8}}{x_{2i-1}^2x_{2i}^2x_{2i+1}^2x_{2i+2}^2}\\
\text{s.t.}\,\,\,\,\, &\ x_{2i-1}^2+x_{2i}^2+x_{2i+1}^2+x_{2i+2}^2=4,\quad i\in[N].
\end{aligned}
\end{equation}
For each $i$, the polynomial
\[
(x_{2i-1}^2+x_{2i}^2+x_{2i+1}^2-4x_{2i+2}^2)x_{2i-1}^2x_{2i}^2x_{2i+1}^2+x_{2i+2}^{8}
\]
is nonnegative with zeros $(\pm 1, \pm 1, \pm 1, \pm 1)$ \cite{MotzkinAGI}. Therefore, 
the optimum of \eqref{ex::css1} equals $4N$. It is clear that Assumption \ref{assump2} holds with $I_i=\{2i-1, 2i, 2i+1, 2i+2\}$ and $J_i=\{i\}$, $i\in[N]$. 
For $N\in\{20,40,60\}$, we solve \eqref{ex::css1} and present the computational results in Table \ref{tab::4}. It can be seen from the table that by exploiting sign symmetries, we gain extra speedup by several times.

\begin{table}[htbp]\label{tab::4}
\caption{Computational results for \eqref{ex::css1}. The symbol `*' indicates that {\tt MOSEK} encounters numerical issues.}
\centering
\begin{tabular}{ccccccc}
\midrule[0.8pt]
\multirow{2}{*}{$N$} &&\multicolumn{2}{c}{$\sup$\eqref{eq::csssdpdual}/Time}&
&\multicolumn{2}{c}{$\sup$\eqref{eq::csssdp}/Time} \\
		\cline{3-4} \cline{6-7}
&& $k=4$ & $k=5$& & $k=4$ & $k=5$\\
\midrule[0.4pt]
		20&&16.65*/1.38s& 80.00/10.6s&&19.06*/0.49s&
		80.00/0.73s\\
            40&&37.71*/3.25s& 160.0/15.8s&&37.71*/0.62s&
		160.0/1.79s\\
		60&&52.17*/6.97s& 240.0/28.9s&&57.75*/1.51s&
		240.0/3.43s\\
\midrule[0.8pt]
\end{tabular}	
\end{table}

\end{example}

\begin{example}
Consider
\begin{equation}\label{ex::css2}
\min_{\bx\in\RR^{2N+1}}\ \sum_{i=1}^N\frac{p_i(\bx)}{q_i(\bx)}\quad \text{s.t.} \ \ x_{2i-1}^2+x_{2i}^2+x_{2i+1}^2=3,\ i\in[N],
\end{equation}
where
\[
\begin{aligned}
p_i(\bx)=\,&x_{2i-1}^{6d}+x_{2i}^{6d}+x_{2i+1}^{6d}+3x_{2i-1}^{2d}x_{2i}^{2d}x_{2i+1}^{2d},\\
q_i(\bx)=\,&x_{2i-1}^{4d}x_{2i}^{2d}+x_{2i-1}^{2d}x_{2i}^{4d}+x_{2i-1}^{4d}x_{2i+1}^{2d}+x_{2i-1}^{2d}x_{2i+1}^{4d}+x_{2i}^{4d}x_{2i+1}^{2d}+x_{2i}^{2d}x_{2i+1}^{4d},
\end{aligned}
\]
for some $d\ge1$. From Example \ref{ex1}, it is easy to see that the optimum of \eqref{ex::css2} equals $N$. Moreover, Assumption \ref{assump2} holds with $I_i=\{2i-1, 2i, 2i+1\}$ and $J_i=\{i\}$, $i\in[N]$. For $N\in\{5,10,20\}$ and $d\in\{2,3,4\}$, we solve \eqref{ex::css2} using the minimum relaxation order $k=3d$ and present the computational results in Table \ref{tab::5}. Again, it can be seen from the table that by exploiting sign symmetries, we gain extra significant speedup.

\begin{table}[htbp]\label{tab::5}
\caption{Computational results for \eqref{ex::css2}.}
\centering
\begin{tabular}{ccccccccc}
\midrule[0.8pt]
\multirow{2}{*}{$N$} &&\multicolumn{3}{c}{$\sup$\eqref{eq::csssdpdual}/Time}&
&\multicolumn{3}{c}{$\sup$\eqref{eq::csssdp}/Time} \\
        \cline{3-5} \cline{7-9}
&& $d=2$ & $d=3$& $d=4$& & $d=2$ &$d=3$& $d=4$\\
\midrule[0.4pt]
        $5$&&5.000/0.28s&5.000/2.58s&5.000/23.1s &&5.000/0.06s&5.000/0.34s&5.000/1.06s \\
        $10$&&10.00/0.56s&10.00/4.35s&10.00/44.1s &&10.00/0.12s&10.00/0.80s&10.00/2.05s \\
        $20$&&20.00/1.17s&20.00/9.50s&20.00/95.8s &&20.00/0.25s&15.00/1.62s&20.00/4.40s \\
\midrule[0.8pt]
\end{tabular}
\end{table}

\end{example}

\subsection{Comparison of \eqref{eq::csssdpdual} with \eqref{eq::csssdp} and the epigraph approach}
In this subsection, we further compare \eqref{eq::csssdpdual} with the epigraph approach which translates \eqref{eq::pro} into a polynomial optimization problem:
\begin{equation}\label{sea}
\left\{
\begin{aligned}
   \inf_{c_i,\bx}&\ \sum_{i=1}^N c_i\\
   \text{s.t.}&\ p_i(\bx)-c_iq_i(\bx)=0,\quad i\in[N],\\
   &\ g_j(\bx)\ge0,\quad j\in[m].
\end{aligned}\tag{SEA}
\right.
\end{equation}
Note that \eqref{sea} inherits the correlative sparsity and sign symmetries of \eqref{eq::pro} which can be exploited to obtain sparse SDP relaxations for \eqref{sea} \cite{wang2022cs}.

\begin{example}
Consider 
\begin{equation}\label{ex::css3}
      \max_{\bx\in\RR^{N+1}}\ \sum_{i=1}^{N}\frac{1}{100(x_{i+1}^2-x_{i}^2)^2+(x_i^2-1)^2+1} 
      \quad  \text{s.t.} \ \ 16-x_{i}^2\ge0,\ i\in[N+1],
\end{equation}
which is modified from the well-known Rosenbrock problem.
Clearly, the optimum of \eqref{ex::css3} equals $N$ and Assumption \ref{assump2} holds with $I_i=J_i=\{i,i+1\}$, $i\in[N]$. 
We solve \eqref{ex::css3} for $N\in\{100,200,400\}$. It turns out that the 
SDP relaxations \eqref{eq::csssdpdual} and \eqref{eq::csssdp} attain global optimality at $k=2$ 
while the epigraph approach attains global optimality at $k=4$.
We present the computational results in Table \ref{tab::6}, from which we can see that 
the sparse SDP relaxation \eqref{eq::csssdp} is the most efficient approach.

\begin{table}[htbp]\label{tab::6}
\caption{Computational results for \eqref{ex::css3}. `EPI' indicates the lower bounds obtained by the epigraph approach.}
\centering
\begin{tabular}{ccccccc}
\midrule[0.8pt]
\multirow{2}{*}{$N$} &&\multicolumn{1}{c}{EPI/Time}&
&\multicolumn{1}{c}{$\sup$\eqref{eq::csssdpdual}/Time}&&\multicolumn{1}{c}{$\sup$\eqref{eq::csssdp}/Time}\\
\cline{3-3} \cline{5-5} \cline{7-7}
&& $k=4$& & $k=2$& &$k=2$\\
\midrule[0.4pt]
$100$&&100.0/2.30s&&100.0/0.15s&&100.0/0.06s\\
$200$&&200.0/6.94s&&200.0/0.58s&&200.0/0.21s\\
$400$&&400.0/20.8s&&400.0/1.72s&&400.0/0.97s\\
\midrule[0.8pt]
\end{tabular}
\end{table}

\end{example}

\begin{example}
Consider
\begin{equation}\label{ex::css5}
\min_{\bx\in\RR^{N+s}}\sum_{i=1}^N\frac{\sum_{j=1}^sx_{i+j-1}x_{i+j}}{1 + \sum_{j=1}^{s+1}jx_{i+j-1}^2}\quad\textrm{s.t. } 1-x_i^2\ge0,\ i\in[N+s],
\end{equation}
for some $s\ge1$. It is clear that Assumption \ref{assump2} holds with $I_i=J_i=\{i,i+1,\ldots,i+s\}$, $i\in[N]$. 
We solve \eqref{ex::css5} for $N\in\{20,40,60\}$ using the minimum relaxation order $k=3$ and present the computational results in Table \ref{tab::8}. From the table, we see that the epigraph approach
yields slightly tighter bounds and moreover, the sparse SDP relaxation \eqref{eq::csssdp} is the most efficient approach.

\begin{table}[htbp]\label{tab::8}
\caption{Computational results for \eqref{ex::css5}. `EPI' indicates the lower bounds obtained by the epigraph approach.}
\centering
\begin{tabular}{ccccccc}
\midrule[0.8pt]
\multirow{2}{*}{$N$} &&\multicolumn{1}{c}{EPI/Time}&
&\multicolumn{1}{c}{$\sup$\eqref{eq::csssdpdual}/Time}&&\multicolumn{1}{c}{$\sup$\eqref{eq::csssdp}/Time}\\
\cline{3-3} \cline{5-5} \cline{7-7}
&& $k=3$& & $k=3$& &$k=3$\\
\midrule[0.4pt]
$20$&&-4.5892/23.9s&&-4.6173/61.6s&&-4.6173/23.3s\\
$40$&&-8.7429/62.0s&&-8.8778/105s&&-8.8778/52.8s\\
$60$&&-12.901/100s&&-13.138/148s&&-13.138/70.8s\\
\midrule[0.8pt]
\end{tabular}
\end{table}

\end{example}

Finally, we would like to mention that the epigraph approach behaves very badly on the examples presented in the previous subsections.

\section{An application to maximizing sums of generalized Rayleigh quotients}\label{app}
In this section, we apply the sparse SDP relaxations to the problem of maximizing a sum of generalized Rayleigh quotients which arises from signal processing. 

In the downlink of a multi-user MIMO system, the base station can multiplex signals intended to different users on the same spectral resource. A challenging problem arising in such a scenario is the joint optimization of channel assignment (scheduling) and beamforming aimed at 
maximizing the sum-rate in each time-slot. Assuming zero-forcing linear beamforming at the base station, 
Primolevo et al. \cite{PSS2006} addressed this task by investigating a greedy method to approximately maximize the sum-rate.
At each iteration of their method a spatial channel needs to be determined to allocate a specific user, which reduces to 
maximizing a sum of generalized Rayleigh quotients of the form
\begin{equation}\label{eq::srq}
\max_{\bz\in\CC^n} \sum_{i=1}^N \frac{\bz^{\rm{H}} A_i \bz}{\bz^{\rm{H}} B_i \bz} \quad \text{s.t. } \ \|\bz\|^2=1,
\end{equation}
where $A_i, B_i\in\CC^{n\times n}$ are positive semidefinite Hermitian matrices, $\bz^{\rm{H}}$ denotes the conjugate transpose
of $\bz$, $n$ is the number of assigned spatial channels at the current iteration, and $N$ is the number of available users in the time-slot. 

When $N=2$, the real counterpart of the problem \eqref{eq::srq} also appears in the sparse Fisher discriminant analysis in pattern recognition (see \cite{DFBSR,FN,WZWCL}). The single generalized Rayleigh quotient optimization 
problem corresponds to the classical eigenvalue problem and can be solved in polynomial time \cite{PBN}.
However, when $N\ge 2$, solving \eqref{eq::srq} is much more challenging and requires sophisticated techniques (see \cite{NSX2016,WWX2018,Zhang2013} for the real case of $N=2$). 
In particular, Primolevo  et al. \cite{PSS2006} restricted $\bz$ in \eqref{eq::srq} to be columns of an identity matrix to give a suboptimal solution.

Here, we convert \eqref{eq::srq} into a real problem by specifying the real and imaginary parts of the complex variables and coefficients appearing in \eqref{eq::srq}. Let $\bz=\bx+\bi\by$, $A_i=A_{i,1}+\bi A_{i,2}$, and 
$B_i=B_{i,1}+\bi B_{i,2}$ with $\bx, \by\in\RR^n$ and $A_{i,1}, A_{i,2}, B_{i,1}, B_{i,2}\in\RR^{n\times n}$. 
Then \eqref{eq::srq} becomes
\begin{equation}\label{eq::srq2}
\max_{\bx,\by\in\RR^n} \sum_{i=1}^N \frac{\bx^{\intercal} A_{i,1}\bx -2\bx^{\intercal} A_{i,2}\by+ \by^{\intercal} A_{i,1}\by}
{\bx^{\intercal} B_{i,1}\bx -2\bx^{\intercal} B_{i,2}\by+ \by^{\intercal} B_{i,1}\by} \quad \text{s.t. }\ \Vert\bx\Vert^2+\Vert\by\Vert^2=1.
\end{equation}
In our numerical experiments, we assume that $A_i$'s are Hermitian and $B_i$'s are positive definite. 
To satisfy this condition, we generate random matrices $C_{i,1}, C_{i,2}, D_{i,1}, D_{i,2}\in\RR^{n\times n}$ with
each entry being drawn from the uniform distribution on $[0,1]$, and let
\[
A_i=(C_{i,1}+\bi C_{i,2}) + (C_{i,1}+\bi C_{i,2})^{\rm{H}},\quad B_i= (D_{i,1}+\bi D_{i,2})^{\rm{H}}(D_{i,1}+\bi D_{i,2}).
\]
We solve one instance of \eqref{eq::srq2} with \eqref{eq::gmpdualsos} and \eqref{eq::gmpdualtssos} for $(n, N)\in\{(3,20),(4,10),(5,5)\}$, and present the computational results in Table \ref{tab8}. From the table, we see that by exploiting sign symmetries, we gain around twice speedup.

\begin{table}[htbp]\label{tab8}
\caption{Computational results for \eqref{eq::srq2}.}
\centering
\begin{tabular}{ccccccccc}
\midrule[0.8pt]
\multirow{2}{*}{$(n, N)$} &&\multicolumn{2}{c}{$\sup$\eqref{eq::gmpdualsos}/Time}&
&\multicolumn{2}{c}{$\sup$\eqref{eq::gmpdualtssos}/Time} \\
        \cline{3-4} \cline{6-7}
& & $k=2$& $k=3$& &$k=2$& $k=3$\\
\midrule[0.4pt]
        $(3, 20)$&&406.7/0.61s&406.7/12.7s &&406.7/0.38s&406.7/5.91s \\
        $(4, 10)$&&26.54/1.03s&20.10/107s &&26.54/0.67s&20.10/50.5s\\
        $(5, 5)$&&20.12/2.43s&19.11/531s &&20.12/1.57s&19.11/339s\\
\midrule[0.8pt]
    \end{tabular}
\end{table}


\bibliographystyle{siamplain}
\bibliography{mrf}

\end{document}